\documentclass[letterpaper, 12pt]{article}

\usepackage[margin=1in]{geometry}
\usepackage{amsfonts, amsmath, amsthm, bm}
\usepackage{authblk}
\usepackage{url}
\usepackage{hyperref}
\hypersetup{
}
\usepackage{color}
\usepackage[ruled,vlined]{algorithm2e}

\usepackage{graphicx}
\graphicspath{ {./images/} }

\usepackage{mathtools}
\newcommand{\defeq}{\vcentcolon=}
\newcommand{\eqdef}{=\mathrel{\mathop:}}

\newcommand{\RR}{\mathbb{R}}
\newcommand{\EE}{\mathbb{E}}
\newcommand{\VV}{\mathbb{V}}
\newcommand{\NN}{\mathbb{N}}
\newcommand{\dd}{\mathrm{d}}

\newtheorem{theorem}{Theorem}[section]

\newtheorem{lemma}[theorem]{Lemma}
\newtheorem{proposition}[theorem]{Proposition}
\newtheorem{definition}[theorem]{Definition}
\newtheorem{assumption}[theorem]{Assumption}

\title{Stochastic Gradient Descent in the Viewpoint of Graduated Optimization}
\author[1]{Da Li}
\author[1]{Jingjing Wu}
\author[1]{Qingrun Zhang}
\affil[1]{Department of Mathematics and Statistics, University of Calgary}

\date{}

\begin{document}

\maketitle

\begin{abstract}

    Stochastic gradient descent (SGD) method is popular for solving non-convex optimization problems in machine learning.
    This work investigates SGD from a viewpoint of graduated optimization, which is a widely applied approach for non-convex optimization problems.
    Instead of the actual optimization problem, a series of smoothed optimization problems that can be achieved in various ways are solved in the graduated optimization approach.
    In this work, a formal formulation of the graduated optimization is provided based on the nonnegative approximate identity, which generalizes the idea of Gaussian smoothing.
    Also, an asymptotic convergence result is achieved with the techniques in variational analysis.
    Then, we show that the traditional SGD method can be applied to solve the smoothed optimization problem.
    The Monte Carlo integration is used to achieve the gradient in the smoothed problem, which may be consistent with distributed computing schemes in real-life applications.
    From the assumptions on the actual optimization problem, the convergence results of SGD for the smoothed problem can be derived straightforwardly.
    Numerical examples show evidence that the graduated optimization approach may provide more accurate training results in certain cases. 

\end{abstract}

\section{Introduction} \label{sec:introduction}

Consider the non-convex optimization problem:
$$ \min_x f(x), \quad x \in \Omega,$$
where $f$ is the objective function which is non-convex and smooth enough, $\Omega$ is a feasible set.
Many machine learning problems and deep neural network training problems can be summarized as the above non-convex optimization problem. 
The stochastic gradient descent (SGD) method and its variants, such as Adagrad \cite{duchi2011adaptive} and Adam \cite{kingma2014adam}, are among the most important tools in machine learning.
The iterative updates in SGD method performs in a way as
$$x_{k+1} = x_k - \alpha_k g(x_k, \xi_k),$$
where $\alpha_k$ is the step size and $g$ is the stochastic gradient which satisfies some additional assumptions on its expectation and variations.
In the classic SGD setting, the noise of the stochastic gradient comes from the dataset $\xi_k$.
In this work, we consider the case where the noise is not only caused by the data, but also caused by the model $x_k$ in the current iteration.
This idea leads to the graduated optimization approach.

The graduated optimization (or named continuation method) is a popular heuristic approach for solving non-convex optimization problems.
A sequence of subproblems with different smoothed objective functions is defined before the optimization process begins.
Starting with the smoothest subproblem, the sequence of subproblems is solved sequentially, where the solution of each subproblem serves as the initial value for the following subproblem.
Heuristically, this approach may lead to the global minimum solution for the non-convex optimization problem instead of a local minimum solution.

\begin{figure}
    \centering
    \includegraphics[width=0.5\textwidth]{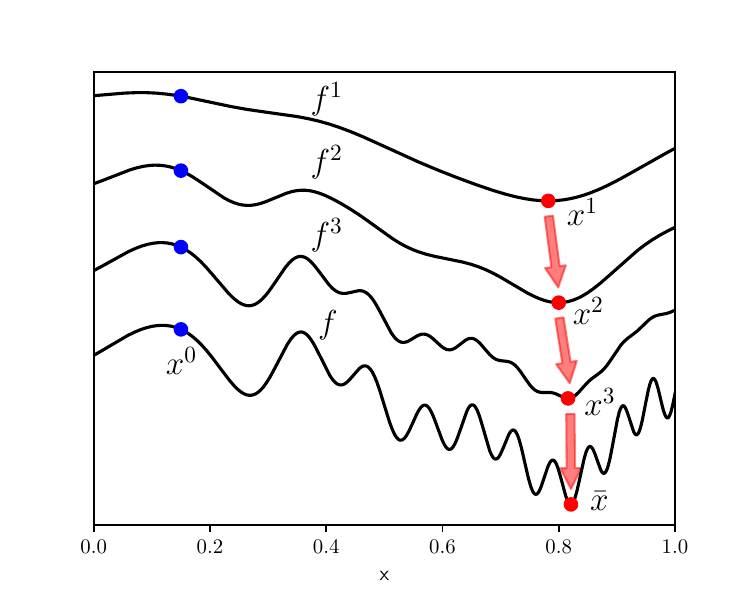}
    \caption{A demonstration of graduated optimization approach.}
    \label{fig:demo_GO}
\end{figure}

\textbf{Motivation.}
The motivation of the graduated optimization approach can be demonstrated in Figure \ref{fig:demo_GO}.
If the objective function $f$ is minimized with the initial point $x_0$, the local minimum around $x=0.25$ will be achieved as the solution.
With the graduated optimization approach, the smoothed objective function $f_1$ is minimized with the initial value $x_0$ at the first stage, and its solution is $x_1$.
Next, the smoothed objective function $f_2$ is minimized with point $x_1$ as the initial value.
Then, solution $x_2$ is used as the initial value of minimizing $f_3$, and so on.
By this construction, we can assume $x_k \to x^*$ which is the global minimum of $f$ as $k \to \infty$.
Notice that if we start with a subproblem in which the objective function is not smooth enough, the solution may still be a local minimum.
For example, when we minimize $f_2$ with $x_0$ as the initial value, the solution will still be a local minimum near $x = 0.25$.

\textbf{Previous work.}
The works on the graduated optimization approach date back to the 1980s under the name continuation method \cite{witkin1987signal}, graduated non-convex (GNC) algorithm \cite{blake1987visual}, and mean field annealing \cite{yuille1989energy}.
Later, a theoretical analysis result is provided in \cite{wu1996effective} from an optimization view:
under certain second order regularity conditions, for any stationary point of the actual objective function, there exists a continuous curve consisting of the stationary point of the objective function of the subproblems.

Since it was proposed, the concept of graduated optimization was successfully applied in many computer vision problem, in an explicitly or implicitly way \cite{zerubia1993mean,nikolova2010fast,mobahi2012seeing}.
Also, the graduated optimization approach has been applied to many application scenarios in the machine learning field, such as semi-supervised learning \cite{chapelle2006continuation}, unsupervised learning \cite{smith2004annealing}, and ranking \cite{chapelle2010gradient}.
In \cite{bengio2009learning}, the author suggested some form of continuation in learning has played an important role in the recent developments in the training of deep architectures \cite{hinton2006fast,erhan2009difficulty}.
A comprehensive survey on the development and application of the graduated optimization approach is provided in \cite{mobahi2015link}.

While it was popular in the machine learning field, theoretical analysis of the graduated optimization was still under development.
A bound on the endpoint solution of the continuation method with the Gaussian smoothing is provided in \cite{mobahi2015theoretical}, with no practical algorithms provided.
Later, a special kind of non-convex objective function, $(a,\sigma)$-nice function, is studied in \cite{hazan2016graduated}.
An implicit smoothing is provided by sampling oracles, and a graduated optimization with gradient oracle algorithm is provided.
The convergence to the global optimum for the $(a,\sigma)$-nice function is provided, and the convergence speed of the algorithm is provided.
The smoothing effect of the graduated optimization approach is not only studied as an optimization strategy but also used to illustrate why the SGD method performs well in practical machine learning tasks \cite{kleinberg2018alternative}.
Based on this viewpoint, a perturbed SGD algorithm is proposed in \cite{Harshvardhan2021Escaping}, where a global convergence result is provided based on the P\L \ condition.
Furthermore, an entropy-SGD method is proposed with a similar smoothing explanation \cite{chaudhari2019entropy}, where the algorithm is a composition of two nested SGD loops, and Langevin dynamics is used to compute the gradient of the local entropy.

\textbf{Contribution.}
In this work, we provide a formulation of the graduated optimization approach based on the nonnegative approximate identity, in which Gaussian smoothing is a special case.
Then, we provide an asymptotic convergence result of the graduated optimization approach with the techniques in variational analysis and no further requirement of special kinds of functions.
We verified that for the smoothed subproblems of the graduated optimization, when its gradient is obtained by the Monte Carlo integration method, the assumptions about the original problem can be inherited into the subproblems.
In this way, many of the convergence results of solving subproblems with the traditional SGD method can be obtained straightforwardly from the convergence result of the actual optimization problem.
Based on this, a multi-layer SGD scheme is formally proposed.
Synthetic numerical examples show the evidence that the graduated optimization approach provides more accurate optimization results in low-dimensional settings.

\textbf{Content.}
The content of this paper is organized as follows:
In section 2, we discuss a formulation of the graduated optimization approach. The asymptotic convergence result is discussed.
The multi-layer SGD scheme is investigated in section 3.
Section 3 shows that the smoothed subproblem in the graduated optimization approach can be solved with the SGD method and the Monte Carlo random sampling technique.
The multi-layer SGD (ML-SGD) is formally provided based on this method.
We show that similar convergence results can be achieved for the subproblem with the assumption of the actual optimization problem.
Two synthetic numerical examples are investigated in section 4.
Additional discussion and outlook are provided in section 5.

\section{Formulation of the graduated optimization approach}

Rewrite the non-convex optimization problem:
\begin{align} \label{prob:opt_prob}
    (\mathrm{P}) \quad \min_x f(x), \quad \text{such that } x \in \Omega,
\end{align}
where $f \in L^1(\RR^n)$.
Denote the actual optimization problem as $(\mathrm{P})$.
The constraint set $\Omega$ is nonempty, closed, and bounded.
In practice, we can assume that the $\Omega$ as a simple box constraint: 
\begin{align}
    \Omega = \{ x \in \RR^n \ \vert \ a \leq x^i \leq b\},
\end{align}
where $a,b \in \RR$ and $a \leq b$.
The constraint set $\Omega$ introduced in the above problem is for the convenience of analysis.
In this work, we assume that the $\Omega$ is large enough such that the optimization process of the above problem is in $\Omega$ and never reaches the boundary of $\Omega$.
Notice that, we are not assuming $\Omega$ is convex. When $\Omega$ is convex, other algorithms for the constrained optimization problem can be developed, and we will not discuss it in this work for simplicity.

\begin{definition}[Approximate identity]
    An approximate identity is a family of functions $\phi_t \in L^1(\RR^n)$ for $t > 0$ such that\\
    (a) $\int_{\RR^n} |\phi_t(x)| \ \dd x \leq A$, where $A$ is a constant independent of $t$.\\
    (b) $\int_{\RR^n} \phi_t(x) \ \dd x = 1$, for all $t>0$. \\
    (c) $\lim_{t\to 0} \int_{|x| \geq \delta} |\phi_t(x)| \ \dd x = 0$, for any $\delta >0$.
\end{definition}
Next, generate an approximate identity with a kernel function $\phi$.
Given $\phi \in L^1(\RR^n)$ with 
\begin{align}
    \int_{\RR^n} \phi(x) \ \dd x = 1.
\end{align}
Then, for $t > 0$, let
\begin{align}
    \phi_t(x) = t^{-n} \phi (t^{-1}x).
\end{align}
It can be easily checked that $\{\phi_t\}$ is an approximate identity.
It is well known that the Gaussian kernel can be formulated as an approximate identity as
\begin{align}\label{eq:heat_kernel}
    \phi_t(x) = (4\pi t)^{-n/2} e^{-|x|^2 / 4 t}.
\end{align}
Given a series nonnegative approximate identities $\{\phi_t\}$, construction a series of smoothed objective functions as
\begin{align} \label{eq:construct_tilde_f}
    \tilde f^t = f * \phi_t,
\end{align}
where $*$ is the convolution operator.
Let $\nu = \lceil 1/t \rceil$, here $\nu$ is a smooth coefficient. Then, define function $f^\nu$ as
\begin{align}
    f^\nu = \tilde f^{\lceil 1/t \rceil},
\end{align}
where $\nu \in \NN$.
Notice that $\{f^\nu\}$ is a subsequence of $\{\tilde f^t\}$.
It can be shown that $f^\nu \to f$ uniformly as $\nu \to \infty$ by Proposition \ref{prop:8.14}.
For each $f_\nu$, define the $\nu$th smoothed subproblem of $(\mathrm{P})$ as
\begin{align}
    (\mathrm{P}^\nu)\quad \min_x f^\nu(x), \quad \text{such that } x \in \Omega.
\end{align}
By this method, a series of smoothed subproblems $(P^\nu)$ of the actual optimization problem is constructed.
Then, a formal algorithm for the graduated optimization can be achieved.

\begin{algorithm}
    \caption{A formal algorithm for the graduated optimization}
    \KwIn{objective function $f$; smooth kernel $\phi$; $\nu_0 < \nu_1 < \cdots < \nu_{N_m} < \infty$\;}
    Find the global minimum of $(P^{\nu_0})$, denoted as $x^0$\;
    \For{$m = 1, 2, \cdots, N_m$}{
        With the initial value $x^{m-1}$, solve the subproblem $(P^{\nu_m})$. Denote the solution as $x^{m}$.}
    \KwOut{$x^{N_m}$}
\end{algorithm}

Next, we discuss the asymptotic convergence of the graduated optimization approach.
\begin{definition}[$\varepsilon$-optimality]
    The $\varepsilon$-optimal solution of minimizing a proper function $f$ on $\RR^n$ can be denoted as a set
    \begin{align}
        \varepsilon \textrm{-} \arg\min f = \{x | f(x) \leq \inf f + \varepsilon\}.
    \end{align}
\end{definition}

Denote the set of the global minimum of the actual problem $(\mathrm{P})$ as $S$, and the set of the global minimum of $\nu$th subproblem $(\mathrm{P}^\nu)$ as $S^\nu$, i.e.,
\begin{align}
    S \defeq \arg\min_{x\in\Omega} f(x), \quad S^\nu \defeq \arg\min_{x\in\Omega} f^\nu(x).
\end{align}
Also, the $\varepsilon$-optimal solution of function $f$ and $f^\nu$ can be denoted as
\begin{align}
    \varepsilon \textrm{-} S \defeq \varepsilon \textrm{-} \arg\min f, \quad \varepsilon \textrm{-} S^\nu \defeq \varepsilon \textrm{-} \arg\min f^\nu.
\end{align}
Our main theorem states the relation between $S^\nu$ and $S$ as $\nu \to \infty$.

\begin{theorem} [Main Theorem] \label{thm:main_result1}
    For the actual optimization problem $(\mathrm{P})$, construct a series of smoothed subproblem $\{ (\mathrm{P}^\nu) \}$.
    There exists an index $N$ such that when $\nu \geq N$, the sets $S^\nu $ are nonempty and form a bounded sequence with
    \begin{align}
        \limsup_\nu S^\nu \subset S.
    \end{align}
    For any $\varepsilon^\nu \to 0$ and $x^\nu \in \varepsilon^\nu \textrm{-} S^\nu $, the sequence $\{x^\nu\}$ is bounded and such that all its cluster points belong to $S$.
    If $S$ consists of a unique point $\bar x$, then $x^\nu \to \bar x$.
\end{theorem}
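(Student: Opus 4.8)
The plan is to recognize the statement as an instance of the standard variational-analysis result on the convergence of infima and (approximate) minimizers under epi-convergence, namely Theorem 7.33 of Rockafellar--Wets, \emph{Variational Analysis}, and to obtain it by verifying that theorem's two hypotheses for our sequence of objectives. To cast the constrained problems $(\mathrm{P})$ and $(\mathrm{P}^\nu)$ in the unconstrained form required there, I would first absorb the constraint into the objective by setting $g \defeq f + \delta_\Omega$ and $g^\nu \defeq f^\nu + \delta_\Omega$, where $\delta_\Omega$ is the indicator of $\Omega$ (equal to $0$ on $\Omega$ and $+\infty$ off $\Omega$). Since $\Omega$ is closed, $\delta_\Omega$ is proper and lower semicontinuous; as $f$ is continuous and each $f^\nu$ is smooth, the functions $g$ and $g^\nu$ are proper and lsc, with $\arg\min g = S$ and $\arg\min g^\nu = S^\nu$.

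The first substantive step is to upgrade the uniform convergence $f^\nu \to f$ supplied by Proposition~\ref{prop:8.14} into epi-convergence (in the Painlev\'e--Kuratowski sense) of $g^\nu$ to $g$. I would verify the two defining conditions directly. For the lower (liminf) inequality, take any $x^\nu \to x$: if $x \notin \Omega$ then closedness of $\Omega$ forces $x^\nu \notin \Omega$ for all large $\nu$, so $g^\nu(x^\nu) = +\infty$ eventually and $\liminf_\nu g^\nu(x^\nu) = +\infty = g(x)$; if $x \in \Omega$, then discarding the indices with $x^\nu \notin \Omega$ (which only raise the liminf) and combining $|f^\nu(x^\nu) - f(x^\nu)| \le \sup_\Omega |f^\nu - f| \to 0$ with continuity of $f$ gives $\liminf_\nu g^\nu(x^\nu) \ge f(x) = g(x)$. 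For the upper (recovery) inequality, the constant sequence $x^\nu \equiv x$ works for every $x \in \Omega$ because $g^\nu(x) = f^\nu(x) \to f(x) = g(x)$, and for $x \notin \Omega$ the inequality is vacuous. This establishes the epi-convergence.

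The second hypothesis, eventual level-boundedness of $\{g^\nu\}$, is immediate: every sublevel set $\{x : g^\nu(x) \le \alpha\}$ is contained in $\mathrm{dom}\, g^\nu = \Omega$, which is bounded, so their union over $\nu$ is bounded for each $\alpha \in \RR$. With propriety, lower semicontinuity, epi-convergence, and eventual level-boundedness all in place, the cited theorem applies verbatim and delivers each clause of the statement at once: $\inf g^\nu \to \inf g$ is finite; there is an index $N$ beyond which $S^\nu = \arg\min g^\nu$ is nonempty and bounded, with $\limsup_\nu S^\nu \subset S$; and for any $\varepsilon^\nu \to 0$ and any selection $x^\nu \in \varepsilon^\nu\textrm{-}S^\nu$ the sequence $\{x^\nu\}$ is bounded with all cluster points in $S$, which collapses to $x^\nu \to \bar x$ when $S = \{\bar x\}$.

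Because everything takes place on the compact set $\Omega$ and the convergence is uniform rather than merely pointwise, I do not expect a genuine obstacle; the only point demanding care is the boundary bookkeeping in the epi-convergence check, i.e.\ ensuring the liminf and recovery sequences interact correctly with $\delta_\Omega$ at points of $\partial\Omega$ and outside $\Omega$. As a cross-check I note that the central inclusion also follows by an elementary argument that bypasses epi-convergence: for $\bar x \in \limsup_\nu S^\nu$ with $x^{\nu_k} \in S^{\nu_k}$ and $x^{\nu_k} \to \bar x \in \Omega$, uniform convergence yields $f(\bar x) = \lim_k f^{\nu_k}(x^{\nu_k}) = \lim_k \min_\Omega f^{\nu_k} = \min_\Omega f$, so $\bar x \in S$; the $\varepsilon$-optimal and uniqueness clauses come from the same chain of equalities carrying the vanishing slack $\varepsilon^{\nu_k}$.
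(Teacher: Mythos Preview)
Your proposal is correct and follows essentially the same route as the paper: absorb the constraint via $g=f+\delta_\Omega$, $g^\nu=f^\nu+\delta_\Omega$, verify eventual level-boundedness from $\mathrm{dom}\,g^\nu\subset\Omega$, check epi-convergence $g^\nu\overset{e}\to g$ through the liminf/limsup characterization using uniform convergence of $f^\nu$ to $f$, and invoke Proposition~\ref{prop:7.33} (Rockafellar--Wets~7.33). Your two-case treatment ($x\in\Omega$ versus $x\notin\Omega$) is in fact a bit tidier than the paper's three-case split, and the elementary cross-check at the end is a nice addition but not needed.
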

The proof is provided in the following subsections.

Theorem \ref{thm:main_result1} provides an asymptotic convergence result of the graduated optimization approach.
For large classes of functions ($L^1(\RR^n)$), the convergence of the graduated optimization approach is guaranteed by the above theorem compared to the previous results in \cite{wu1996effective,hazan2016graduated}. 
Moreover, the kernel functions that construct the smoothed objective functions are not limited to the Gaussian kernel.
However, the above theorem does not provide an accurate convergence rate as the work \cite{hazan2016graduated}.
How to choose the smooth kernel $\phi$ and the smooth coefficients $\nu$ is still a heuristic decision.

\subsection{Prerequisite}

In this subsection, we briefly review the prerequisite in variational analysis.
Most of the contents can be found in the textbook \cite{rockafellar2009variational} and \cite{folland1999real}.

Denote a subset of $\NN$ to represent the index of convergence sequences as
\begin{align}
    \mathcal{N}_\infty \defeq \{N \subset \NN \ \vert \ \NN / N \text{ finite}\}.
\end{align}
This notation is useful when we representing the index $\nu \to \infty$ for $\nu \in \NN$.

\begin{definition}[Eventually level boundedness]
    A sequence of sets $C^\nu \subset \RR^n$ is eventually bounded that for some index set $N \in \mathcal{N}_\infty$ if the set $\cup_{\nu \in N} C^\nu$ is bounded. \\
    A sequence of functions $f^\nu$ is eventually level-bounded if for each $\alpha \in \RR^n$, the sequence of sets $\mathrm{lev}_{\leq \alpha} f^\nu$ is eventually bounded.
\end{definition}

\begin{proposition}[\cite{rockafellar2009variational} 7.32 (a)] \label{prop:eventually_boundedness}
    The sequence $\{f^\nu \}$ is eventually level-bounded if the sequence of sets $\text{dom } f^\nu$ is eventually bounded.
\end{proposition}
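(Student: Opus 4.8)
The plan is to reduce the proposition to the elementary set inclusion that relates sublevel sets to effective domains, after which the conclusion is almost immediate. First I would unpack the hypothesis via the definition of eventual boundedness: there exists an index set $N \in \mathcal{N}_\infty$ (a cofinite subset of $\NN$) such that the union $\bigcup_{\nu \in N} \text{dom } f^\nu$ is bounded. This single fixed $N$ will serve as the witnessing index set throughout the argument.

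Next I would record the key observation that for every finite level $\alpha \in \RR$ and every index $\nu$ one has the inclusion $\mathrm{lev}_{\leq \alpha} f^\nu \subset \text{dom } f^\nu$: indeed, if a point $x$ satisfies $f^\nu(x) \leq \alpha$ with $\alpha$ finite, then in particular $f^\nu(x) < +\infty$, so $x$ lies in the effective domain. Taking the union over $\nu \in N$ yields $\bigcup_{\nu \in N} \mathrm{lev}_{\leq \alpha} f^\nu \subset \bigcup_{\nu \in N} \text{dom } f^\nu$, whose right-hand side is bounded by hypothesis; hence the left-hand side is bounded as well. By the definition of eventual boundedness for set sequences, this says precisely that the sequence of level sets $\{\mathrm{lev}_{\leq \alpha} f^\nu\}$ is eventually bounded, witnessed by $N$.

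Finally I would observe that the inclusion above, and therefore the boundedness conclusion, holds for the same index set $N$ regardless of the choice of $\alpha$. Thus for each $\alpha \in \RR$ the level sets $\mathrm{lev}_{\leq \alpha} f^\nu$ are eventually bounded, which is exactly the definition of $\{f^\nu\}$ being eventually level-bounded, completing the proof.

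As for the main obstacle: there is essentially none of substance, since the statement collapses to a single containment. The only point deserving any care is the uniformity of the witnessing index set $N$ across all levels $\alpha$, which is what one must check to match the definition of eventual level boundedness. This uniformity follows at once because the domain containment $\mathrm{lev}_{\leq \alpha} f^\nu \subset \text{dom } f^\nu$ holds simultaneously for every $\alpha$, so no level-dependent refinement of $N$ is ever needed.
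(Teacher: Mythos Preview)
Your proof is correct. The paper does not supply its own argument for this proposition; it simply cites it from \cite{rockafellar2009variational}, Proposition 7.32(a), so there is nothing to compare against beyond noting that your elementary verification via the containment $\mathrm{lev}_{\leq \alpha} f^\nu \subset \text{dom } f^\nu$ is precisely the standard reasoning behind the cited result.
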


\begin{proposition}[\cite{rockafellar2009variational} 7.2] \label{prop:epi_conv}
    Let ${f^\nu}$ be any sequence of functions on $\RR^n$, and let $x$ be any point of $\RR^n$. Then, $f^\nu \overset{e} \to f$ if and only if at each point $x$ one has
    \begin{align}
        \lim\inf_\nu f^\nu (x^\nu) \geq f(x) \quad \text{for every sequence } x^\nu \to x, \label{eq:epi_conv_liminf}\\
        \lim\sup_\nu f^\nu (x^\nu) \leq f(x) \quad \text{for some sequence } x^\nu \to x. \label{eq:epi_conv_limsup}
    \end{align}
\end{proposition}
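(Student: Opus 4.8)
The plan is to unwind the definition of epi-convergence into a statement about epigraphs and then translate that set relation into the two pointwise inequalities. By definition $f^\nu \overset{e}{\to} f$ means that the epigraphs converge in the Painlev\'e--Kuratowski sense, i.e. $\limsup_\nu \epi f^\nu \subset \epi f \subset \liminf_\nu \epi f^\nu$, where the outer limit $\limsup_\nu \epi f^\nu$ consists of all cluster points of sequences $(x^\nu,\alpha^\nu)\in\epi f^\nu$ and the inner limit $\liminf_\nu \epi f^\nu$ of all genuine limits of such sequences. I would prove the proposition by showing separately that the outer inclusion $\limsup_\nu \epi f^\nu \subset \epi f$ is equivalent to the liminf condition \eqref{eq:epi_conv_liminf}, and that the inner inclusion $\epi f \subset \liminf_\nu \epi f^\nu$ is equivalent to the limsup condition \eqref{eq:epi_conv_limsup}; conjoining the two equivalences gives exactly the claimed biconditional.

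For the outer inclusion, suppose first that \eqref{eq:epi_conv_liminf} holds and let $(x,\alpha)$ be any cluster point of points $(x^\nu,\alpha^\nu)\in\epi f^\nu$; along the realizing subsequence one has $x^\nu\to x$ and $f^\nu(x^\nu)\le\alpha^\nu\to\alpha$, so the liminf condition applied along that subsequence gives $f(x)\le\liminf_\nu f^\nu(x^\nu)\le\alpha$, hence $(x,\alpha)\in\epi f$. Conversely, assuming the outer inclusion, fix any $x^\nu\to x$, put $\beta=\liminf_\nu f^\nu(x^\nu)$, and pass to a subsequence on which $f^\nu(x^\nu)\to\beta$; then $(x^\nu,f^\nu(x^\nu))\in\epi f^\nu$ converges to $(x,\beta)$, so $(x,\beta)\in\epi f$, i.e. $f(x)\le\beta$. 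To license the use of \eqref{eq:epi_conv_liminf} along a subsequence I would first record a small lemma that its full-sequence form implies its subsequence form, proved by padding a subsequence off its index set with the constant $x$ and using that the $\liminf$ of the resulting full sequence is dominated by the $\liminf$ along the subsequence.

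For the inner inclusion the height coordinate must be handled explicitly. Assuming \eqref{eq:epi_conv_limsup}, fix $(x,\alpha)\in\epi f$, so $f(x)\le\alpha<\infty$, choose $x^\nu\to x$ with $\limsup_\nu f^\nu(x^\nu)\le f(x)$, and set $\alpha^\nu=\max\{f^\nu(x^\nu),\alpha\}$; then $(x^\nu,\alpha^\nu)\in\epi f^\nu$, while $\alpha^\nu\ge\alpha$ and $\limsup_\nu\alpha^\nu\le\alpha$ force $\alpha^\nu\to\alpha$, so $(x,\alpha)\in\liminf_\nu\epi f^\nu$. Conversely, assuming the inner inclusion, fix any $x$; if $f(x)=+\infty$ the condition holds vacuously with $x^\nu=x$, while if $f(x)<\infty$ the point $(x,f(x))$ lies in $\liminf_\nu\epi f^\nu$, so there exist $(x^\nu,\alpha^\nu)\in\epi f^\nu$ with $(x^\nu,\alpha^\nu)\to(x,f(x))$, and then $f^\nu(x^\nu)\le\alpha^\nu\to f(x)$ yields precisely \eqref{eq:epi_conv_limsup}.

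I expect the principal difficulty to be the bookkeeping of the vertical coordinate $\alpha$ in tandem with the extended-real values of the $f^\nu$: one must build the height sequence $\alpha^\nu$ so that it simultaneously dominates $f^\nu(x^\nu)$ and converges to the prescribed level $\alpha$, and one must track whether a given $\liminf$ or $\limsup$ is attained along the full sequence or only along a subsequence. These are precisely the places where a careless argument would tacitly assume finiteness or drop to a subsequence without justification, so I would fix the extended-arithmetic conventions at the outset and verify each limiting relation on the subsequence where the relevant $\liminf$ or $\limsup$ is realized.
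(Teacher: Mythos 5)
Note first that the paper contains no proof of this proposition: it is imported verbatim as background (Proposition 7.2 of \cite{rockafellar2009variational}), so the only meaningful comparison is with the standard textbook argument. Your decomposition is exactly that argument: epi-convergence means Painlev\'e--Kuratowski convergence of epigraphs, the outer inclusion $\limsup_\nu \epi f^\nu \subset \epi f$ is equivalent to \eqref{eq:epi_conv_liminf}, the inner inclusion $\epi f \subset \liminf_\nu \epi f^\nu$ is equivalent to \eqref{eq:epi_conv_limsup}, and your padding lemma (extending a subsequence by the constant $x$ and using that the full-sequence $\liminf$ is dominated by the subsequence $\liminf$) is precisely the right device for applying the full-sequence hypothesis along the subsequence realizing a cluster point.

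There is, however, one genuine gap, in the converse of the inner inclusion. Your case split ``$f(x)=+\infty$'' versus ``$f(x)<\infty$'' silently assumes finiteness in the second case: the proposition is stated for arbitrary (hence, in the setting of \cite{rockafellar2009variational}, extended-real-valued) functions, and when $f(x)=-\infty$ the point $(x,f(x))$ is not a point of $\epi f$ at all, since epigraph heights are real numbers. In that case the inner inclusion only gives you, for each $m \in \NN$, a sequence $x^{\nu}\to x$ with $\limsup_\nu f^\nu(x^{\nu}) \leq -m$, and you must diagonalize across $m$ to manufacture a single sequence with $f^\nu(x^\nu)\to -\infty$; that diagonal extraction is a missing idea, not mere bookkeeping. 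A smaller instance of the same issue sits in your converse of the outer inclusion, where $(x^\nu, f^\nu(x^\nu))$ fails to be an epigraph point whenever $f^\nu(x^\nu)=\pm\infty$; the fix is to use heights $\max\{f^\nu(x^\nu),\beta_\nu\}$ with real $\beta_\nu \to \beta$ when $\beta$ is finite, and, when $\beta=-\infty$, to run the argument at each fixed real level $\alpha$ to conclude $f(x)=-\infty$. You do flag extended-real arithmetic as the danger zone in your closing paragraph, which is the right instinct, but the $f(x)=-\infty$ branch needs the explicit construction above. For the way the paper actually uses the proposition (Step 3 of the main theorem's proof, applied to $g^\nu$ and $g$, which are proper and never take the value $-\infty$), these cases are vacuous, so your argument as written covers everything the paper needs.
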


\begin{proposition}[\cite{rockafellar2009variational} 1.6] \label{prop:lsc_property}
    For a function $f:\RR^n \to \bar \RR$, $f$ is lower semicontinuous on $\RR^n$ if and only if the level sets of type $\text{lev}_{\leq \alpha} f$ are closed in $\RR^n$.
\end{proposition}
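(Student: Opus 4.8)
The plan is to establish the equivalence by proving each implication separately, relying on the sequential characterization of lower semicontinuity—namely that $f$ is lsc at $x$ precisely when $\liminf_k f(x_k) \geq f(x)$ for every sequence $x_k \to x$—together with the fact that in $\RR^n$ a set is closed if and only if it is sequentially closed. Since $f$ takes values in $\bar\RR$, I would first observe that the argument carries over with extended-real arithmetic, because the only comparisons that enter are of the form $f(x) \leq \alpha$ with $\alpha \in \RR$.

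For the forward implication, suppose $f$ is lsc and fix $\alpha \in \RR$. To show $\text{lev}_{\leq \alpha} f$ is closed I would take any sequence $x_k \in \text{lev}_{\leq \alpha} f$ with $x_k \to x$; then $f(x_k) \leq \alpha$ for all $k$, and lower semicontinuity yields $f(x) \leq \liminf_k f(x_k) \leq \alpha$, so $x \in \text{lev}_{\leq \alpha} f$. Sequential closedness thus gives closedness in $\RR^n$, and since $\alpha$ was arbitrary this covers all level sets of the stated type.

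For the reverse implication I would argue by contradiction: assume every level set is closed but $f$ fails to be lsc at some point $x$, so there is a sequence $x_k \to x$ with $\liminf_k f(x_k) < f(x)$, and choose a real number $\alpha$ strictly between these two quantities. The delicate step—and the main thing to get right—is the extraction: from $\liminf_k f(x_k) < \alpha$ one obtains infinitely many indices with $f(x_k) \leq \alpha$, hence a subsequence lying in $\text{lev}_{\leq \alpha} f$ that still converges to $x$; closedness of that level set then forces $f(x) \leq \alpha$, contradicting $\alpha < f(x)$. I do not expect a genuine obstacle beyond this careful liminf bookkeeping and the extended-valued case $f(x) = +\infty$, which causes no trouble because a failure of lower semicontinuity there forces $\liminf_k f(x_k)$ to be finite, so a finite separating $\alpha$ still exists.
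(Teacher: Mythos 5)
Note first that the paper does not prove this proposition: it is imported verbatim as a prerequisite, cited as Theorem 1.6 of \cite{rockafellar2009variational}, so there is no in-paper argument to compare yours against. Your proof is the standard sequential one and is correct: the forward direction via sequential closedness of $\mathrm{lev}_{\leq\alpha} f$, and the reverse direction by separating $\liminf_k f(x_k) < f(x)$ with a real $\alpha$ and extracting a subsequence inside $\mathrm{lev}_{\leq\alpha} f$. One small slip in your closing remark: if $f(x) = +\infty$ and lower semicontinuity fails at $x$, this forces $\liminf_k f(x_k) < +\infty$, but it does \emph{not} force the liminf to be finite---it may equal $-\infty$. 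This does not damage the proof, since a real separating $\alpha$ exists in every subcase: between any two distinct elements of $\bar\RR$ there is a real number (if $\liminf_k f(x_k) = -\infty$, any $\alpha \in \RR$ with $\alpha < f(x)$ works). With that justification repaired, the argument is complete, including the extended-real-valued cases.
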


The following proposition provides the uniform convergence of $f * \phi_t$.
\begin{proposition}[\cite{folland1999real} Theorem 8.14]\label{prop:8.14}
    Suppose $\phi \in L^1(\RR^n)$ and $\int \phi(x) \ \dd x = 1$.
    If $f$ is bounded and uniformly continuous, then $f * \phi_t \to 1 f$ uniformly as $t \to 0$.
\end{proposition}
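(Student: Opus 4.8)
The plan is to reduce the convolution to a single integral by a change of variables, and then control the error by splitting the domain into a compact core, where uniform continuity applies, and a far tail, where the integrability of $\phi$ takes over. First I would use the scaling $\phi_t(y) = t^{-n}\phi(t^{-1}y)$ and substitute $z = y/t$ in $(f*\phi_t)(x) = \int_{\RR^n} f(x-y)\phi_t(y)\,\dd y$ to obtain
\[
(f * \phi_t)(x) = \int_{\RR^n} f(x - tz)\, \phi(z)\, \dd z .
\]
Since $\int_{\RR^n}\phi(z)\,\dd z = 1$, I can subtract $f(x)$ inside the integral and write the error as
\[
(f * \phi_t)(x) - f(x) = \int_{\RR^n} \big[f(x - tz) - f(x)\big]\,\phi(z)\,\dd z ,
\]
so that
\[
|(f*\phi_t)(x) - f(x)| \le \int_{\RR^n} |f(x-tz) - f(x)|\,|\phi(z)|\,\dd z .
\]

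Next I would fix $\varepsilon > 0$ and choose the splitting radius. Because $\phi \in L^1(\RR^n)$, I can pick $R$ so large that $\int_{|z|>R}|\phi(z)|\,\dd z < \varepsilon$; on this tail region the integrand is bounded using boundedness of $f$, namely $|f(x-tz)-f(x)| \le 2\|f\|_\infty$, uniformly in $x$ and $t$. On the core $|z|\le R$ I invoke uniform continuity: there is $\delta > 0$ with $|f(u)-f(v)| < \varepsilon$ whenever $|u-v| < \delta$, so for $t < \delta/R$ we have $|tz| < \delta$ and hence $|f(x-tz)-f(x)| < \varepsilon$ for every $x$. Combining the two contributions yields the uniform bound
\[
|(f*\phi_t)(x) - f(x)| \le \varepsilon\,\|\phi\|_1 + 2\|f\|_\infty\,\varepsilon
\]
for all $x$ once $t$ is sufficiently small, and since $\varepsilon$ is arbitrary this establishes uniform convergence.

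The crucial point, and essentially the only real obstacle, is that uniform continuity by itself cannot control $|f(x-tz)-f(x)|$ over all of $\RR^n$, because $tz$ need not be small when $z$ is large, no matter how small $t$ is. The compact-core/tail decomposition is precisely what circumvents this: the core is handled by uniform continuity and the tail by integrability of $\phi$ together with boundedness of $f$. I would emphasize that both estimates are uniform in $x$ by construction, which is what upgrades pointwise control to the uniform conclusion $f*\phi_t \to f$ claimed in the statement.
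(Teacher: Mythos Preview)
Your proof is correct and is exactly the standard argument found in Folland's text. Note, however, that the paper does not give its own proof of this proposition: it is quoted as a prerequisite from \cite{folland1999real} (Theorem~8.14) and used as a black box in the proof of Lemma~\ref{lemma:epi_conv_fnu}. There is therefore nothing in the paper to compare against beyond the citation itself, and your argument reproduces the textbook proof that the citation points to.
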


The following proposition shows the regularity of the objective functions in the subproblem.
\begin{proposition}[\cite{folland1999real}, Proposition 8.10]\label{prop:8.10}
    If $f \in L^1$, $g \in C^k$, and $\partial^\alpha g$ is bounded for $|\alpha| \leq k$, then $f * g \in C^k$ and $\partial^\alpha(f * g) = f * (\partial^\alpha g)$ for $|\alpha| \leq k$.
\end{proposition}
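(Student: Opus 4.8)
The plan is to prove the statement by induction on $k$, where the entire content of the inductive step reduces to justifying differentiation under the integral sign for a single partial derivative. Writing the convolution explicitly as $(f * g)(x) = \int_{\RR^n} f(y)\, g(x - y)\,\dd y$, the goal for the base case $k = 1$ is to show that for each coordinate direction $e_j$ the partial derivative $\partial_j (f*g)$ exists, equals $f * (\partial_j g)$, and is continuous. The boundedness hypothesis on $\partial_j g$ is precisely what supplies the integrable dominating function needed to pass the derivative through the integral, so I expect the whole argument to hinge on producing that dominating function and invoking dominated convergence.

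First I would handle the base case. Fixing $x$ and a nonzero scalar $h$, I would form the difference quotient
\begin{align}
    \frac{(f*g)(x + h e_j) - (f*g)(x)}{h} = \int_{\RR^n} f(y)\, \frac{g(x + h e_j - y) - g(x - y)}{h}\,\dd y. \nonumber
\end{align}
By the mean value theorem applied to $g$ in the $j$th variable, the inner difference quotient equals $\partial_j g(x + \theta h e_j - y)$ for some $\theta \in (0,1)$, hence is bounded in absolute value by $\|\partial_j g\|_\infty$. Therefore the integrand is dominated by $\|\partial_j g\|_\infty\, |f(y)|$, which lies in $L^1(\RR^n)$ since $f \in L^1$. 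Letting $h \to 0$, the integrand converges pointwise to $f(y)\, \partial_j g(x - y)$, so the dominated convergence theorem yields $\partial_j (f * g)(x) = (f * \partial_j g)(x)$.

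Next I would establish continuity of this derivative, which is needed to conclude $f * g \in C^1$. Here I would use the general fact that the convolution of an $L^1$ function with a bounded continuous function is continuous: for $h \in C^0$ bounded and any shift $z$, the difference $(f * h)(x + z) - (f * h)(x)$ has integrand $f(y)[h(x + z - y) - h(x - y)]$, which tends to $0$ pointwise as $z \to 0$ and is dominated by $2 \|h\|_\infty |f(y)| \in L^1$; dominated convergence again gives continuity. Applying this with $h = \partial_j g$ (continuous because $g \in C^k$ with $k \geq 1$, and bounded by hypothesis) shows $\partial_j(f*g) = f * \partial_j g$ is continuous, so $f * g \in C^1$.

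Finally I would run the induction. Assuming the result through order $k-1$, for a multi-index $\alpha$ with $|\alpha| = k$ I would write $\alpha = \beta + e_j$ with $|\beta| = k - 1$, so that $\partial^\alpha(f*g) = \partial_j\bigl(\partial^\beta(f*g)\bigr) = \partial_j\bigl(f * \partial^\beta g\bigr)$ using the inductive hypothesis. Since $\partial^\beta g \in C^1$ and its derivative $\partial_j \partial^\beta g = \partial^\alpha g$ is bounded by assumption (as $|\alpha| \leq k$), the base-case result applies to $f * (\partial^\beta g)$ and delivers $\partial^\alpha(f*g) = f * \partial^\alpha g$, continuous by the same convolution-continuity argument. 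The main obstacle is entirely in the base case, namely exhibiting the integrable dominating bound via the mean value theorem; the boundedness assumption on the derivatives is exactly the hypothesis that makes this work, and everything past the first derivative is a routine reduction.
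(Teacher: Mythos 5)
Your proof is correct, and it is essentially the standard argument for this result: the paper itself states the proposition without proof, citing Folland's textbook, where the proof is exactly your route—justify $\partial_j(f*g)=f*\partial_j g$ by dominating the difference quotient via the mean value theorem and applying dominated convergence, get continuity of $f*h$ for bounded continuous $h$ by the same theorem, and induct on $|\alpha|$. The only cosmetic remark is that for complex-valued $g$ one would replace the mean value theorem by the identity $\frac{g(x+he_j-y)-g(x-y)}{h}=\int_0^1 \partial_j g(x+the_j-y)\,\dd t$, which yields the same bound $\|\partial_j g\|_\infty$; in the real-valued setting of this paper your step is fine as written.
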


The following proposition provides the connection between the uniform convergence and epigraph convergence for the sequence $f^{\nu}$.
\begin{proposition}[\cite{rockafellar2009variational} 7.15] \label{prop:7.15}
    Consider $f^\nu, f: \RR^n \to \bar \RR$ and a set $X \subset \RR^n$.
    If the functions $f^\nu$ are lower semicontinuous (lsc) relative to $X$ and converge uniformly to $f$ on $X$, then $f$ is lsc relative to $X$.
    Also, $f^\nu$ epi-converge to $f$ relative to $X$, written as $f^\nu \overset{e} \to f$ relative to $X$.
\end{proposition}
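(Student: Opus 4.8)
The plan is to establish the two assertions separately: first that the uniform limit $f$ is lsc relative to $X$, and then that $f^\nu \overset{e}\to f$ relative to $X$ by verifying the two sequential conditions of Proposition \ref{prop:epi_conv}, where throughout ``relative to $X$'' means that every point $x$ and every sequence $x^\nu \to x$ appearing in those conditions is required to lie in $X$. Both parts rest on the single elementary estimate supplied by uniform convergence: for each $\varepsilon > 0$ there is an index $N_\varepsilon$ so that $f(y) - \varepsilon \leq f^\nu(y) \leq f(y) + \varepsilon$ for all $y \in X$ and all $\nu \geq N_\varepsilon$.

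For lower semicontinuity of $f$ relative to $X$, fix $x \in X$ and a sequence $x^k \to x$ with $x^k \in X$; I must show $\liminf_k f(x^k) \geq f(x)$. Given $\varepsilon > 0$, pick any $\nu \geq N_\varepsilon$. Using the uniform estimate at the points $x^k$, then the lsc of $f^\nu$ relative to $X$, and finally the uniform estimate at $x$,
\begin{align}
    \liminf_k f(x^k) \geq \liminf_k \big(f^\nu(x^k) - \varepsilon\big) \geq f^\nu(x) - \varepsilon \geq f(x) - 2\varepsilon. \nonumber
\end{align}
Letting $\varepsilon \downarrow 0$ gives $\liminf_k f(x^k) \geq f(x)$, so $f$ is lsc relative to $X$.

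For epi-convergence I check the two conditions of Proposition \ref{prop:epi_conv}. The $\liminf$ condition \eqref{eq:epi_conv_liminf}: for $x \in X$ and any $x^\nu \to x$ with $x^\nu \in X$, the uniform estimate gives $f^\nu(x^\nu) \geq f(x^\nu) - \varepsilon$ once $\nu \geq N_\varepsilon$, whence $\liminf_\nu f^\nu(x^\nu) \geq \liminf_\nu f(x^\nu) - \varepsilon \geq f(x) - \varepsilon$ by the lsc of $f$ just proved; letting $\varepsilon \downarrow 0$ yields $\liminf_\nu f^\nu(x^\nu) \geq f(x)$. The $\limsup$ condition \eqref{eq:epi_conv_limsup} requires only one admissible sequence, and the constant sequence $x^\nu \equiv x \in X$ works: uniform convergence forces pointwise convergence $f^\nu(x) \to f(x)$, so $\limsup_\nu f^\nu(x) = f(x) \leq f(x)$. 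This verifies both conditions, giving $f^\nu \overset{e}\to f$ relative to $X$.

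The only genuine subtlety, and the part needing care, is that the functions are $\bar\RR$-valued, so the additive estimate $|f^\nu - f| \leq \varepsilon$ must be read as the statement that $f^\nu \to f$ uniformly in a bounded, order-compatible metric on $\bar\RR$ (for instance $d(\alpha,\beta) = |\arctan\alpha - \arctan\beta|$). At points where $f(x)$ is finite the additive form is recovered for large $\nu$, so the displayed inequalities above are unaffected; at points where $f(x) = \pm\infty$ one argues directly from $d$, using that $d$-uniform convergence drives the values $f^\nu(x^k)$ into arbitrarily high (resp.\ low) sublevel sets, which suffices for the $\liminf$ and $\limsup$ conclusions. In the paper's application the subproblem objectives $f^\nu = f * \phi_t$ are real-valued and continuous by Proposition \ref{prop:8.10}, so the finite-valued argument above applies verbatim and this extended-real bookkeeping can be bypassed.
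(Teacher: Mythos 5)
Your proof is correct, but note that the paper itself offers no proof of this statement: it is quoted verbatim as a prerequisite from the textbook (Rockafellar--Wets, \emph{Variational Analysis}, 7.15), so there is no internal argument to compare against. Your argument is essentially the standard one for that theorem: the three-$\varepsilon$ chain $\liminf_k f(x^k) \geq \liminf_k f^\nu(x^k) - \varepsilon \geq f^\nu(x) - \varepsilon \geq f(x) - 2\varepsilon$ correctly transfers lower semicontinuity to the uniform limit, and your verification of the two sequential conditions of Proposition \ref{prop:epi_conv} is sound --- in particular, using the constant sequence $x^\nu \equiv x$ for the $\limsup$ condition \eqref{eq:epi_conv_limsup} is exactly the right move, and is legitimate relative to $X$ since $x \in X$. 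You also correctly identify the one genuine subtlety, the meaning of uniform convergence for $\bar\RR$-valued functions; your resolution via a bounded order-compatible metric matches in spirit how the textbook handles it (it defines uniform convergence of extended-real-valued functions through a truncation device with the same effect), and your closing observation is apt: in this paper's application the $f^\nu = f * \phi_{1/\nu}$ are finite-valued and continuous by Proposition \ref{prop:8.10}, and the set $X = \Omega$ is closed, so the finite-valued argument suffices where Lemma \ref{lemma:epi_conv_fnu} invokes the proposition, with the extension to $g, g^\nu$ taking the value $+\infty$ handled separately in the paper's proof of Theorem \ref{thm:main_result1}.
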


\begin{proposition}[\cite{rockafellar2009variational} 7.33 convergence in minimization] \label{prop:7.33}
    Suppose the sequence $\{f_\nu\}$ is eventually level-bounded, and  $f^\nu \overset{e} \to f$ with $f^\nu$ and $f$ lsc and proper. Then
    \begin{align*}
        \inf f^\nu \to \inf f \quad (\text{finite}),
    \end{align*}
    while for $\nu$ in some index set $N \in \mathcal{N}_\infty$ the sets $\arg\min f^\nu$ are nonempty and form a bounded sequence with
    \begin{align*}
        \lim\sup_\nu (\arg\min f^\nu) \subset \arg\min f.
    \end{align*}
    Indeed, for any choice of $\varepsilon^\nu \to 0$ and $x^\nu \in \varepsilon^\nu \textrm{-} \arg\min f^\nu$, the sequence $\{x^\nu\}_{\nu\in\NN}$ is bounded and such that all its cluster points belong to $\arg\min f$.
    If $\arg\min f$ consists of a unique point $\bar x$, one must actually have $x^\nu\to\bar x$.
\end{proposition}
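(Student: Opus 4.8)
The plan is to prove $\inf f^\nu \to \inf f$ through two one-sided inequalities, then read off every statement about minimizers from a single near-minimizer argument. I would first establish $\limsup_\nu(\inf f^\nu) \leq \inf f$ using only the recovery-sequence half \eqref{eq:epi_conv_limsup} of Proposition \ref{prop:epi_conv}: for any fixed $x$, choose $x^\nu \to x$ with $\limsup_\nu f^\nu(x^\nu) \leq f(x)$; since $\inf f^\nu \leq f^\nu(x^\nu)$ this gives $\limsup_\nu \inf f^\nu \leq f(x)$, and taking the infimum over $x$ proves the inequality. Because $f$ is proper, $\inf f < +\infty$, so $\inf f^\nu$ is eventually finite and bounded above.

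Next, the reverse inequality $\liminf_\nu(\inf f^\nu) \geq \inf f$ is where eventual level-boundedness is essential, and I expect it to be the main obstacle. I would pass to a subsequence along which $\inf f^\nu \to \beta \defeq \liminf_\nu \inf f^\nu$, pick near-minimizers $x^\nu$ with $f^\nu(x^\nu) \leq \inf f^\nu + \varepsilon^\nu$ for some $\varepsilon^\nu \to 0$, and use the upper bound from the first step to place them eventually in a common level set $\mathrm{lev}_{\leq \alpha} f^\nu$ with $\alpha = \inf f + 1$. Eventual level-boundedness then makes $\{x^\nu\}$ bounded, so a further subsequence converges, $x^\nu \to \bar x$, and the lower-bound half \eqref{eq:epi_conv_liminf} gives $f(\bar x) \leq \liminf_\nu f^\nu(x^\nu) \leq \beta$. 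Since $f$ is proper, $f(\bar x) > -\infty$; combining $\inf f \leq f(\bar x)$, $f(\bar x) \leq \beta$, and $\beta \leq \inf f$ from the first step forces $\inf f = f(\bar x) = \beta$ to be finite, which yields $\inf f^\nu \to \inf f$.

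With convergence of the infima in hand, I would show $\arg\min f^\nu$ is nonempty for all large $\nu$, i.e. for $\nu$ in some $N \in \mathcal{N}_\infty$: the set $\mathrm{lev}_{\leq \alpha} f^\nu$ with $\alpha = \inf f + 1$ is nonempty once $\inf f^\nu < \alpha$, closed by Proposition \ref{prop:lsc_property} (as $f^\nu$ is lsc), and bounded by eventual level-boundedness, hence compact; an lsc function attains its minimum on a nonempty compact set, and that value is exactly $\inf f^\nu$. Boundedness of $\{\arg\min f^\nu\}$ then follows from $\arg\min f^\nu \subset \mathrm{lev}_{\leq \alpha} f^\nu$. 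Finally, for the cluster-point conclusion I would rerun the boundedness-and-liminf argument for an arbitrary $\varepsilon^\nu \to 0$ and $x^\nu \in \varepsilon^\nu\textrm{-}\arg\min f^\nu$: boundedness supplies cluster points, and $f^\nu(x^\nu) \leq \inf f^\nu + \varepsilon^\nu \to \inf f$ together with \eqref{eq:epi_conv_liminf} forces every cluster point $\bar x$ to satisfy $f(\bar x) \leq \inf f$, i.e. $\bar x \in \arg\min f$; specializing to $\varepsilon^\nu \equiv 0$ gives $\limsup_\nu \arg\min f^\nu \subset \arg\min f$. The uniqueness claim is then immediate, since a bounded sequence whose only cluster point is $\bar x$ must converge to $\bar x$.
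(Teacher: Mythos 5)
This proposition is quoted in the paper directly from Rockafellar--Wets (Theorem 7.33) as a prerequisite with no proof supplied, so there is no internal argument to compare against; your blind proof is correct and is essentially the standard textbook route: the recovery-sequence half \eqref{eq:epi_conv_limsup} yields $\limsup_\nu \inf f^\nu \leq \inf f$, while the liminf half \eqref{eq:epi_conv_liminf} combined with compactness of the level sets $\mathrm{lev}_{\leq \alpha} f^\nu$ (closed since $f^\nu$ is lsc, via Proposition \ref{prop:lsc_property}, and bounded by eventual level-boundedness) gives the reverse inequality, attainment of the minima, and the cluster-point and uniqueness claims. The one step you elide is routine but worth stating: inequality \eqref{eq:epi_conv_liminf} is formulated for full sequences $x^\nu \to x$, so when you pass to a convergent subsequence $x^{\nu_k} \to \bar{x}$ you should fill in the remaining indices (e.g.\ by setting $x^\nu = \bar{x}$ for $\nu \notin \{\nu_k\}$) before invoking it, which preserves the bound $f(\bar{x}) \leq \liminf_k f^{\nu_k}(x^{\nu_k})$ and does not affect correctness.
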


\subsection{Proof of the main theorem}
The following lemma provides the epi-convergence result of the objective functions of the subproblems $(\mathrm{P}^\nu)$.
\begin{lemma} \label{lemma:epi_conv_fnu}
    The objective function $f^\nu$ of the subproblem $(\mathrm{P}_\nu)$ epi-converge to the objective function $f$ of $(\mathrm{P})$ relative to $\Omega$, i.e., $f^\nu \overset{e} \to f$ relative to $\Omega$.
\end{lemma}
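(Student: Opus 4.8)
The plan is to obtain the epi-convergence as a direct consequence of \emph{uniform} convergence, using Proposition~\ref{prop:7.15} as the bridge. That proposition requires two inputs relative to $X = \Omega$: first, that each $f^\nu$ be lower semicontinuous relative to $\Omega$; second, that $f^\nu \to f$ uniformly on $\Omega$. I would verify these two hypotheses separately and then invoke Proposition~\ref{prop:7.15} to conclude $f^\nu \overset{e}\to f$ relative to $\Omega$. As a bonus, that same proposition yields that $f$ is lsc relative to $\Omega$, a fact needed later when applying Proposition~\ref{prop:7.33}.

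For the regularity input, recall that $f^\nu = f * \phi_t$ with the kernel scale tied to $\nu$ through $\nu = \lceil 1/t \rceil$. Taking the generating kernel $\phi$ to be smooth with bounded derivatives---as is the case for the Gaussian kernel \eqref{eq:heat_kernel}, whose scaled versions $\phi_t$ lie in $C^\infty(\RR^n)$ with all derivatives bounded---Proposition~\ref{prop:8.10} applies with $f \in L^1(\RR^n)$ and $g = \phi_t$, giving $f^\nu \in C^\infty(\RR^n)$. In particular each $f^\nu$ is continuous, hence lower semicontinuous, relative to $\Omega$, which settles the first hypothesis.

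For the uniform-convergence input, I would appeal to Proposition~\ref{prop:8.14}. Its hypotheses are that $\phi \in L^1(\RR^n)$ with $\int \phi = 1$, already built into the construction of the approximate identity, and that $f$ be bounded and uniformly continuous. Under these assumptions Proposition~\ref{prop:8.14} gives $f * \phi_t \to f$ uniformly on $\RR^n$ as $t \to 0$; since $\nu \to \infty$ forces $t \to 0$, the subsequence $\{f^\nu\}$ inherits this uniform convergence, and restricting to $\Omega$ yields $f^\nu \to f$ uniformly on $\Omega$. With both hypotheses of Proposition~\ref{prop:7.15} in hand, the conclusion $f^\nu \overset{e}\to f$ relative to $\Omega$ follows immediately.

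The main obstacle is justifying the regularity demanded by Proposition~\ref{prop:8.14}: the problem statement only posits $f \in L^1(\RR^n)$, whereas uniform convergence of the mollifications requires $f$ to be bounded and uniformly continuous. I would close this gap by leaning on the standing assumption that $f$ is ``smooth enough'' together with the compactness of $\Omega$---continuity on (a neighborhood of) the compact set $\Omega$ supplies both the boundedness and the uniform continuity that Proposition~\ref{prop:8.14} needs on the region that matters. A careful version of the argument should note that, because the convolution $f * \phi_t$ samples values of $f$ near but not only on $\Omega$, the uniform continuity must hold on a neighborhood of $\Omega$ rather than on $\Omega$ alone; once this is acknowledged, the remaining steps are routine applications of the cited propositions.
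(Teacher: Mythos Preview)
Your proposal is correct and follows essentially the same route as the paper: establish uniform convergence via Proposition~\ref{prop:8.14}, continuity (hence lower semicontinuity) of each $f^\nu$ via Proposition~\ref{prop:8.10}, and then invoke Proposition~\ref{prop:7.15} to obtain epi-convergence relative to $\Omega$. The only cosmetic difference is that the paper applies Proposition~\ref{prop:8.10} with the roles swapped (taking $\phi_t \in L^1$ and $f$ continuous rather than $f \in L^1$ and $\phi_t$ smooth), and it does not pause, as you do, to flag the implicit boundedness/uniform-continuity hypothesis on $f$ needed for Proposition~\ref{prop:8.14}.
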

\begin{proof}
    Since ${\phi_t}$ is an nonnegative approximate identity, by the construction of sequence $\{\tilde f^t\}$ (equation \eqref{eq:construct_tilde_f}), $\tilde f^t \to f$ uniformly on $\Omega$ as $t \to 0$ by Proposition \ref{prop:8.14}.
    Since $f$ is continuous and $\phi_t \in L^1(\RR^n)$, $\tilde f^t$ is continuous relative to $\Omega$ for all $t > 0$ by Proposition \ref{prop:8.10}.
    Since ${f^\nu}$ is a subsequence of $\{\tilde f^t\}$, the proof is finished by Proposition \ref{prop:7.15}.
\end{proof}

Proof of the main theorem.
\begin{proof}
    \textbf{1. Construct extended real-valued functions.}

    For the objective functions of problem $(\mathrm{P})$ and $(\mathrm{P}^\nu)$, define extended real-valued functions:
    \begin{align} \label{eq:construction_g}
        g(x) &= \begin{cases}
            f(x), \quad &x \in \Omega, \\ \infty, \quad &x \notin \Omega,
        \end{cases}\\
        g^\nu(x) &= \begin{cases}
            f^\nu(x), \quad &x \in \Omega, \\ \infty, \quad &x \notin \Omega,
        \end{cases}
    \end{align}
    Obviously, problems $(\mathrm{P})$ and $(\mathrm{P}^\nu)$ are respectively equivalent to
    \begin{align}
        (\mathrm{\tilde P}) \quad \min_{x\in\RR^n} g(x), \quad \text{and} \quad (\mathrm{\tilde P^\nu}) \quad \min_{x\in\RR^n} g^\nu(x).
    \end{align}
    Also,
    \begin{align} 
        \arg\min_{x\in\Omega} f(x) &= \arg\min_{x\in\RR^n} g(x), \label{eq:argminf}\\
        \arg\min_{x\in\Omega} f^\nu(x) &= \arg\min_{x\in\RR^n} g^\nu(x). \label{eq:argminfnu}
    \end{align}

    \noindent \textbf{2. Eventually level boundedness of $\{g^\nu\}$.}

    By the construction of $g$ and $g^\nu$, $\text{dom } g \subset \Omega$, $\text{dom } g^\nu \subset \Omega$.
    It is easy to see the sequence of sets $\text{dom } g^\nu$ is a eventually bounded.
    By Proposition \ref{prop:eventually_boundedness}, the sequence of functions $\{g^\nu\}$ is eventually level-bounded.

    \noindent \textbf{3. Epi-convergence of $\{g^\nu\}$.}

    By Lemma \ref{lemma:epi_conv_fnu}, we have $f^\nu \overset{e} \to f$ relative to $\Omega$, and equations \eqref{eq:epi_conv_liminf} and \eqref{eq:epi_conv_limsup} hold. 
    Then, we show $g^\nu \overset{e} \to g$ by Proposition \ref{prop:epi_conv}.

    Case 1: $x \notin \Omega$. For any sequence $x^\nu \to x$,
    \begin{align}
        \liminf_\nu g^\nu(x^\nu) = \infty, \quad \limsup_\nu g^\nu(x^\nu) = \infty.
    \end{align}
    Also, $g(x) = \infty$, equations \eqref{eq:epi_conv_liminf} and \eqref{eq:epi_conv_limsup} hold.

    Case 2: $x \in \Omega$ and $x \notin \partial \Omega$. For any sequence $x^\nu \to x$,
    \begin{align}
        \liminf_\nu g^\nu(x^\nu) = \liminf_\nu f^\nu(x^\nu), \quad \limsup_\nu g^\nu(x^\nu) = \limsup_\nu f^\nu(x^\nu).
    \end{align}
    Also, $g(x) = f(x)$. Then, equations \eqref{eq:epi_conv_liminf} and \eqref{eq:epi_conv_limsup} hold.

    Case 3: $x \in \partial \Omega$.
    For any sequence $x^\nu \to x$, consider the case there is a subsequence $x^{\nu_k} \to x$ with $x^{\nu_k} \in \RR^n / \Omega$.
    Then,
    \begin{align}
        \liminf_\nu g^\nu (x^\nu) = \infty \geq g(x).
    \end{align}
    For any sequence $x^\nu \to x$ with no subsequence $x^{\nu_k} \to x$ with $x^{\nu_k} \in \RR^n / \Omega$,
    \begin{align}
        \liminf_\nu g^\nu (x^\nu) = \liminf_\nu f^\nu(x^\nu) = f(x) = g(x).
    \end{align}
    Then, for any sequence $x^\nu \to x$,
    \begin{align}
        \liminf_\nu g^\nu (x^\nu) \geq g(x).
    \end{align}
    On the other hand, there exists a sequence $x^\nu \to x$ with all $x^\nu \in \Omega$.
    In this case,
    \begin{align}
        \limsup_\nu g^\nu (x^\nu) = \limsup_\nu f^\nu (x^\nu) \leq f(x) = g(x).
    \end{align}
    Equations \eqref{eq:epi_conv_liminf} and \eqref{eq:epi_conv_limsup} hold. Then, $g^\nu \overset{e} \to g$.

    \noindent \textbf{4. Continuity of $g$ and $g^\nu$.}

    By the construction of $f^\nu$ and Proposition \ref{prop:8.10}, $f^\nu$ is continuous in $\Omega$.
    Then, by the construction of $g$ and $g^\nu$, $g$ and $g^\nu$ are proper.
    Since $g$ and $g_\nu$ are continuous on a closed set $\Omega$, then the level set $\mathrm{lev}_{\leq \alpha} g$ and $\mathrm{lev}_{\leq \alpha} g_\nu$ are closed for all $\alpha \in \RR$.
    Then $g$ and $g_\nu$ are lsc by Proposition \ref{prop:lsc_property}.

    Notice that by equations \eqref{eq:argminf} and \eqref{eq:argminfnu}, the solution set of $(\mathrm{P})$ and $(\mathrm{P}_\nu)$ are equivalent to the solution set of $(\mathrm{\tilde P})$ and $(\mathrm{\tilde P}^\nu)$.
    Let $S = \arg\min_{x\in\Omega} f(x)$ and $S^\nu = \arg\min_{x\in\Omega} f^\nu(x)$.
    With all the discussions above, the proof is finished by Proposition \ref{prop:7.33}.

\end{proof}

\section{Multi-layer SGD scheme}
This section discusses a multi-layer SGD (ML-SGD) scheme.
Based on the graduated optimization approach discussed in the previous section, the optimization process can be formulated as solving a series of sequential subproblems.
In many machine learning scenarios, the objection function of the actual optimization problem $(\mathrm{P})$ can be written as
\begin{align}
    f(x) = \int_{\Xi} L(x,\xi) \ \dd P(\xi),
\end{align}
where $\xi$ is the random variables with distribution $P$, which is supported on $\Xi \subset \RR^d$.
When we solve the actual optimization problem $(\mathrm{P})$ with the SGD method, at the $k$th iteration, the stochastic gradient is denoted as $g(x_k, \xi_k)$ with some additional assumptions are satisfied.

Recall the discussion in the previous section, a series of smoothed subproblems are required for the graduated optimization approach.
Denote the objective function of the $\nu$th subproblem $(\mathrm{P}^\nu)$ as
\begin{align}
    f^\nu(x) = f * \phi_{1/\nu} (x) = \int_{\RR^d} f(y) \phi_{1/\nu}(x-y) \ \dd y,
\end{align}
where $\phi_{1/\nu}$ is a kernel discussed in the previous section.
Then, the gradient of $f^\nu(x)$ is given by the Bochner integral:
\begin{align}
    \nabla f^\nu (x) = \int_{\RR^d} \nabla f(y) \phi_{1/\nu}(x-y) \ \dd y.
\end{align}
Next, apply the SGD method to solve the subproblem $(\mathrm{P}^\nu)$.
Based on the above equation, by replacing the gradient with the stochastic gradient, denote the stochastic gradient for the smoothed objective function $f^\nu(x)$ at $k$th iteration as
\begin{align} \label{eq:sto_grad_subproblem0}
    g^\nu (x_k, \xi_k) = \int_{\RR^d} g(y,\xi_k) \phi_{1/\nu}(x_k - y)\ \dd y.
\end{align}

Notice that there is an integral in the above stochastic gradient $g^\nu (x, \xi_k)$.
One straightforward way is to evaluate the integral with Monte Carlo random sampling.
Denote
\begin{align} \label{eq:sto_grad_subproblem}
    g_{N_\nu}^\nu (X_k^\nu, \xi_k) = \frac{1}{N_\nu} \sum_{i=1}^{N_\nu} g ((X_k^\nu)_i, \xi_k),
\end{align}
where $\xi_k$ is still the same random variable, $X_k^\nu$ is a random variable with probability density function $\phi_{1/\nu} (x_k - \cdot)$, which is a kernel function.
When the Gaussian kernel \eqref{eq:heat_kernel} is used, $\phi_{1/\nu} (x_k - \cdot)$ is the Gaussian distribution centered at $x_k$.
This is the case of applying Gaussian smoothing.
Figure \ref{fig:grad_demo} provides a demonstration of gradient evaluation by Monte Carlo random sampling.

\begin{figure}[h]
    \centering
    \includegraphics[width=0.5\textwidth]{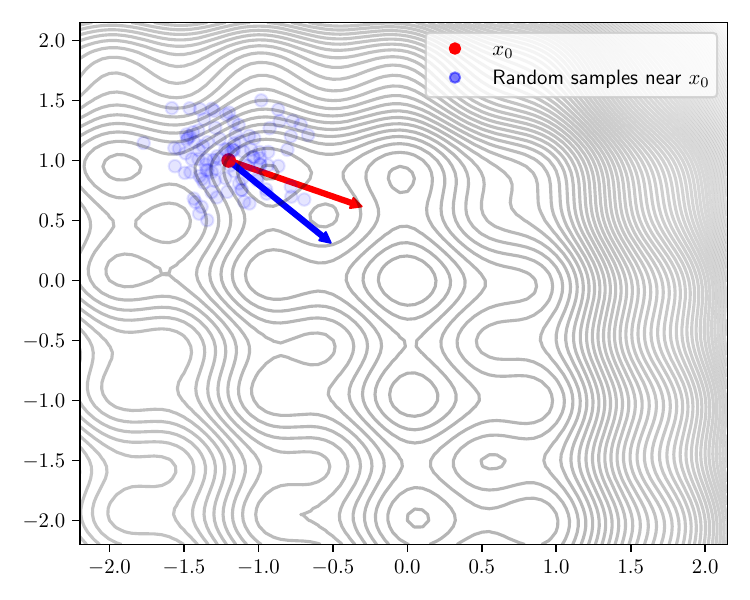}
    \caption{A demonstration of gradient evaluation. The update direction of the actual objective function at $x_0$ is given by the red arrow. The update direction of the smoothed objective function at $x_0$ is given by the blue arrow.}
    \label{fig:grad_demo}
\end{figure}

Algorithm \ref{algorithm:SGD_subproblem} is to apply the SGD method for solving $(\mathrm{P}^\nu)$ with random sampling.
\begin{algorithm}
    \label{algorithm:SGD_subproblem}
    \caption{SGD method solving $(\mathrm{P}^\nu)$ with random sampling}
    Input: $\phi_{1/\nu}$, $x_0^\nu$ \\
    \For(){$k = 1, 2, \cdots$}{
        Generate realizations of random variable $\xi_k$ with $P$, $X_k^\nu$ with $\phi_{1/\nu} (x_k^\nu - \cdot)$.\\
        Compute the stochastic gradient $g_{N_\nu}^\nu(X_k^\nu,\xi_k)$ by equation \eqref{eq:sto_grad_subproblem}. \\
        Update $x_{k+1}^\nu = x_k^\nu - \alpha_k g_{N_\nu}^\nu(X_k^\nu,\xi_k)$, where $\alpha_k$ is the stepsize.
    }
\end{algorithm}

Recall the updates in the classic SGD method can be written as
\begin{align}
    x_{k+1} = x_k - \alpha_k g(x_k, \xi_k),
\end{align}
where $\xi_k$ represents the sample or the batch of samples used in the current iteration.
In this case, the randomness of the stochastic gradient is caused by the data.
However, in Algorithm \ref{algorithm:SGD_subproblem}, the randomness of the stochastic gradient is caused not only by the data but also by the model.
Later we show that when certain assumptions are met, as long as the convergence result for solving the actual optimization problem ($\mathrm{P}$) with the SGD method is available, similar conclusions can also be made for solving the subproblem ($\mathrm{P}^\nu$) with Algorithm \ref{algorithm:SGD_subproblem}.

A formal ML-SGD scheme is provided by Algorithm \ref{algorithm:ML-SGD}.
\begin{algorithm}
    \label{algorithm:ML-SGD}
    \caption{A multi-layer SGD scheme with random sampling}
    Input: an index set $\{\nu_1, \cdots, \nu_{N_m}\}$ where $\nu_{m} \leq \nu_{m-1}$, \\
    \quad the kernel function $\phi$, the initial value $x^0$: \\
    \For(){$m = 1, 2, \cdots, N_m$}{
        Construct the kernel function $\phi_{1/\nu}$.\\
        With the initial value $x^{m-1}$, solve the $m$th subproblem by Algorithm \ref{algorithm:SGD_subproblem}, denote the result as $x^{m}$
    }
    Return: $x^{N_m}$.
\end{algorithm}

Notice that, Algorithm \ref{algorithm:ML-SGD} is a formal scheme that provides no guarantee of converging to the global minimum.
Indeed, Theorem \ref{thm:main_result1} states that the above multi-layer approach can provide a global minimum result.
However, we still need to emphasize that the smooth kernel $\phi$ and smooth coefficients ${\nu}$ are hyperparameters that need to be determined heuristically.

\subsection{An example of convergence analysis}
In this subsection, we show that the convergence result of the subproblem $(\mathrm{P}^\nu)$ can be directly achieved with the assumptions of the actual problem $(\mathrm{P})$.

First, we discuss the Lipschitz smooth gradient assumption:
\begin{assumption} \label{assump:Lipschitz}
    Assume $f: \RR^d \to \RR$ is continuously differentiable.
The gradient of $f$, $\nabla f: \RR^d \to \RR^d$ is Lipschitz continuous with Lipschitz constant $L > 0$, i.e. for all $x, y \in \RR^d$:
\begin{align}
    \|\nabla f(x) - \nabla f(y)\| \leq L\|x-y\|.
\end{align}
\end{assumption}

It is easy to see that the Lipschitz smoothness of $\nabla f$ can be inherited by $\nabla f^\nu$.
\begin{lemma} \label{lemma:Lipschitz_subproblem}
    For the objective function of $\nu$-th subproblem $f^\nu$, the gradient $\nabla f^\nu: \RR^d \to \RR^d$ is Lipschitz continuous with Lipschitz constant $L$.
\end{lemma}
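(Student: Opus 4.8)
The plan is to work directly from the integral representation of the gradient stated above, $\nabla f^\nu(x) = \int_{\RR^d} \nabla f(y)\,\phi_{1/\nu}(x-y)\,\dd y$ (justified by Proposition \ref{prop:8.10}), and to exploit two structural features of the approximate identity: it is nonnegative and it integrates to one. First I would recenter the kernel by the change of variables $u = x - y$, which turns the representation into $\nabla f^\nu(x) = \int_{\RR^d} \nabla f(x-u)\,\phi_{1/\nu}(u)\,\dd u$. The purpose of this substitution is to move the spatial variable $x$ inside the gradient and leave the kernel argument $u$ free, so that the \emph{same} kernel weight $\phi_{1/\nu}(u)$ can be used simultaneously at $x$ and at any comparison point $z$.

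Next, for arbitrary $x, z \in \RR^d$ I would subtract the two representations under a common integral,
\begin{align*}
    \nabla f^\nu(x) - \nabla f^\nu(z) = \int_{\RR^d} \big[\nabla f(x-u) - \nabla f(z-u)\big]\,\phi_{1/\nu}(u)\,\dd u,
\end{align*}
take norms, and move the norm inside the integral. Because $\phi_{1/\nu} \ge 0$, the generalized (Minkowski) triangle inequality for vector-valued integrals applies cleanly and gives
\begin{align*}
    \|\nabla f^\nu(x) - \nabla f^\nu(z)\| \le \int_{\RR^d} \|\nabla f(x-u) - \nabla f(z-u)\|\,\phi_{1/\nu}(u)\,\dd u.
\end{align*}

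The decisive step is the cancellation supplied by Assumption \ref{assump:Lipschitz}: since $(x-u) - (z-u) = x - z$, the Lipschitz bound yields $\|\nabla f(x-u) - \nabla f(z-u)\| \le L\|x-z\|$ \emph{uniformly in} $u$. Pulling this constant out of the integral and invoking property (b) of the approximate identity, $\int_{\RR^d}\phi_{1/\nu}(u)\,\dd u = 1$, collapses the integral to $1$ and leaves $\|\nabla f^\nu(x) - \nabla f^\nu(z)\| \le L\|x-z\|$, which is exactly the claim with the same constant $L$.

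I do not anticipate a genuine obstacle; this is a short computation rather than a delicate argument. The only point needing care is the justification for passing the norm inside the integral, which rests on the triangle inequality for the Bochner integral and, crucially, on the nonnegativity of $\phi_{1/\nu}$ — precisely the property that distinguishes the nonnegative approximate identity used throughout. The cancellation of the $u$-dependence after recentering is what allows the Lipschitz constant to pass through the convolution unchanged, and the normalization $\int\phi_{1/\nu}=1$ is what prevents any inflation of the constant.
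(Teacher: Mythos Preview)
Your proposal is correct and follows essentially the same route as the paper's own proof: recenter the kernel via the substitution $u = x - y$, subtract the two integral representations, move the norm inside by the Bochner integral inequality together with the nonnegativity of $\phi_{1/\nu}$, apply Assumption \ref{assump:Lipschitz} (the $u$-dependence cancels), and finish with $\int \phi_{1/\nu} = 1$. The paper's argument is line-for-line the same up to variable names.
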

The proof is deferred to the following subsection. 

The following assumptions are made for the stochastic gradient at $k$th iteration $g(x_k, \xi_k)$.
\begin{assumption} \label{assump:stochastic_gradient}
    For any $k \geq 1$, $x\in\Omega$, we make assumptions\\
    (a) $\EE \left[g(x, \xi_k)\right] = \nabla f(x)$,\\
    (b) $\EE \left[\left\|g(x,\xi_k) - \nabla f(x) \right\|^2\right] \leq \sigma^2$.
\end{assumption}

The following lemma states that the stochastic gradient $g^\nu (x, \xi_k)$ is an unbiased estimator of $\nabla f^\nu (x_k)$, and the variance of the random variable $\left\| g^\nu (x_k, \xi_k) - \nabla f^\nu (x_k) \right\|$ is bounded.
\begin{lemma} \label{lemma:stochastic_gradient_1}
    For the stochastic gradient $g^\nu (x, \xi_k)$ in equation \eqref{eq:sto_grad_subproblem0}, by Assumption \ref{assump:stochastic_gradient}, we have\\
    (a) $\EE \left[ g^\nu (x_k, \xi_k)\right] = \nabla f^\nu (x_k)$,\\
    (b) $\EE \left[ \left\| g^\nu (x_k, \xi_k) - \nabla f^\nu (x_k) \right\|^2\right] \leq \sigma^2$.
\end{lemma}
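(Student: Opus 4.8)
The plan is to prove both claims by interchanging the expectation over $\xi_k$ with the kernel integral and then exploiting that $\phi_{1/\nu}(x_k - \cdot)$ is a probability density. For part (a), I would start from equation \eqref{eq:sto_grad_subproblem0}, pull the expectation $\EE$ (taken over $\xi_k$) inside the integral over $y$ by Fubini's theorem, and apply Assumption \ref{assump:stochastic_gradient}(a), which gives $\EE[g(y,\xi_k)] = \nabla f(y)$. This yields exactly $\int_{\RR^d} \nabla f(y)\, \phi_{1/\nu}(x_k-y)\ \dd y = \nabla f^\nu(x_k)$, matching the Bochner-integral expression for $\nabla f^\nu$.

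For part (b), the key step is to first write the centered gradient as a single integral, $g^\nu(x_k,\xi_k) - \nabla f^\nu(x_k) = \int_{\RR^d} [g(y,\xi_k) - \nabla f(y)]\, \phi_{1/\nu}(x_k-y)\ \dd y$, which follows from part (a) together with linearity. Since $\{\phi_t\}$ is a nonnegative approximate identity, $\phi_{1/\nu}(x_k-\cdot)$ is nonnegative and integrates to one, so it is the density of the random variable $X_k^\nu$; the integral above is therefore the expectation over $X_k^\nu$ (with $\xi_k$ held fixed) of $g(X_k^\nu,\xi_k) - \nabla f(X_k^\nu)$. Applying Jensen's inequality to the convex map $v \mapsto \|v\|^2$ moves the squared norm inside the $X_k^\nu$-expectation, and then Fubini lets me swap the $\xi_k$- and $X_k^\nu$-expectations. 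Finally, applying Assumption \ref{assump:stochastic_gradient}(b) pointwise at $x = X_k^\nu$ bounds the inner expectation by $\sigma^2$, and integrating this constant against the density returns the stated bound $\sigma^2$.

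The routine work consists of the two expectation–integral interchanges; the only place needing care is justifying Fubini, which requires the integrands to be jointly integrable—guaranteed because $g(\cdot,\xi_k)$ is $L^1$-controlled and the kernel has unit mass. The conceptual crux, and the step I would foreground, is the Jensen/convexity argument: it is precisely the nonnegativity of the approximate identity (so that $\phi_{1/\nu}(x_k-\cdot)$ is a genuine probability density) that lets me treat smoothing as averaging, and thereby transfer the variance bound without any inflation of the constant $\sigma^2$. Note that the nonnegativity assumption is essential here—a signed kernel would still give part (a) but could enlarge the variance, so this is the one hypothesis I would be careful not to drop.
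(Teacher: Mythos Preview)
Your argument for part (a) is exactly the paper's: interchange $\EE_{\xi_k}$ with the $y$-integral and invoke Assumption~\ref{assump:stochastic_gradient}(a). For part (b) your route is correct but genuinely different from the paper's. The paper first applies the triangle inequality for the Bochner integral to get $\|g^\nu-\nabla f^\nu\|\le\int\|g(y,\xi_k)-\nabla f(y)\|\,\phi_{1/\nu}(x_k-y)\,\dd y$, then bounds this integral by $\max_y\|g(y,\xi_k)-\nabla f(y)\|$ (using that the kernel has unit mass), squares, and finally takes the expectation, concluding $\EE[(\max_y\|g(y,\xi_k)-\nabla f(y)\|)^2]\le\sigma^2$. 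Your approach instead treats $\phi_{1/\nu}(x_k-\cdot)$ as a probability density and applies Jensen's inequality to the convex map $v\mapsto\|v\|^2$, then swaps $\EE_{\xi_k}$ with the $y$-integral and invokes the pointwise bound from Assumption~\ref{assump:stochastic_gradient}(b). What your approach buys is that it uses only the assumption as literally stated---a pointwise second-moment bound---whereas the paper's final inequality really needs the stronger uniform statement $\EE_{\xi_k}\bigl[\sup_y\|g(y,\xi_k)-\nabla f(y)\|^2\bigr]\le\sigma^2$, which does not follow from a pointwise bound in general. Your emphasis on nonnegativity of the kernel is also well placed: it is exactly what licenses Jensen here, and a signed kernel would indeed spoil the argument.
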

The proof is deferred to the following subsection. 

The following lemma states that the stochastic gradient $g_{N_\nu}^\nu (X_k^\nu, \xi_k)$ is an unbiased estimator of $\nabla f^\nu (x_k)$, and the variance of the random variable $\left\| g_{N_\nu}^\nu (X_k^\nu, \xi_k) - \nabla f^\nu (x_k) \right\|$ is bounded.
\begin{lemma} \label{lemma:stochastic_gradient_2}
    For the stochastic gradient $g_{N_\nu}^\nu (X_k^\nu, \xi_k)$ in equation \eqref{eq:sto_grad_subproblem}, by Assumption \ref{assump:stochastic_gradient}, we have\\
    (a) $\EE\left[ g_{N_\nu}^\nu (X_k^\nu, \xi_k) \right] = \nabla f^\nu (x_k)$.\\
    (b) $\EE \left[ \left\| g_{N_\nu}^\nu (X_k^\nu, \xi_k) - \nabla f^\nu (x_k) \right\|^2 \right] \leq \sigma^2 + \frac{\sigma^2 + M}{N_\nu}$, where $M = \max_x \|\nabla f(x)\|^2$.
\end{lemma}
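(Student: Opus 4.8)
My plan is to exploit the two distinct sources of randomness in $g_{N_\nu}^\nu(X_k^\nu,\xi_k)$ separately: the data sample $\xi_k\sim P$, which is \emph{shared} by all $N_\nu$ terms of the Monte Carlo average, and the model samples $(X_k^\nu)_1,\dots,(X_k^\nu)_{N_\nu}$, which are i.i.d.\ with density $\phi_{1/\nu}(x_k-\cdot)$ and independent of $\xi_k$. The guiding observation is that, conditionally on $\xi_k$, the summands $g((X_k^\nu)_i,\xi_k)$ are i.i.d.\ with conditional mean exactly $g^\nu(x_k,\xi_k)$ from \eqref{eq:sto_grad_subproblem0}, since $\EE[g((X_k^\nu)_i,\xi_k)\mid\xi_k]=\int_{\RR^d} g(y,\xi_k)\phi_{1/\nu}(x_k-y)\,\dd y=g^\nu(x_k,\xi_k)$. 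For part (a) I would take the full expectation, pull the sum out by linearity, and apply the tower property: for each fixed $y$, Assumption~\ref{assump:stochastic_gradient}(a) gives $\EE[g(y,\xi_k)]=\nabla f(y)$, so each term integrates against $\phi_{1/\nu}(x_k-\cdot)$ to $\nabla f^\nu(x_k)$, and averaging $N_\nu$ identical terms leaves $\nabla f^\nu(x_k)$.

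For part (b) the key step is a bias–variance style decomposition around the conditional mean. Writing $\mu=\nabla f^\nu(x_k)$, I would split
\begin{align*}
g_{N_\nu}^\nu(X_k^\nu,\xi_k)-\mu=\underbrace{\big(g_{N_\nu}^\nu(X_k^\nu,\xi_k)-g^\nu(x_k,\xi_k)\big)}_{A}+\underbrace{\big(g^\nu(x_k,\xi_k)-\mu\big)}_{B}.
\end{align*}
Since $B$ is a function of $\xi_k$ alone and $\EE[A\mid\xi_k]=0$ by part (a) applied conditionally, the cross term satisfies $\EE[\langle A,B\rangle]=\EE[\langle\EE[A\mid\xi_k],B\rangle]=0$, so the two contributions add: $\EE\|g_{N_\nu}^\nu-\mu\|^2=\EE\|A\|^2+\EE\|B\|^2$. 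The term $\EE\|B\|^2=\EE\|g^\nu(x_k,\xi_k)-\nabla f^\nu(x_k)\|^2\le\sigma^2$ is exactly Lemma~\ref{lemma:stochastic_gradient_1}(b), which produces the leading $\sigma^2$.

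It then remains to bound the Monte Carlo error $\EE\|A\|^2$. Conditionally on $\xi_k$, $A$ is the average of $N_\nu$ i.i.d.\ zero-mean vectors, so the cross terms in the expansion of $\|A\|^2$ vanish and $\EE[\|A\|^2\mid\xi_k]=\frac{1}{N_\nu}\EE[\|g((X_k^\nu)_1,\xi_k)-g^\nu(x_k,\xi_k)\|^2\mid\xi_k]$. This inner quantity is a conditional variance, hence at most the conditional second moment $\frac{1}{N_\nu}\EE[\|g((X_k^\nu)_1,\xi_k)\|^2\mid\xi_k]$. Taking total expectation and then, for each fixed model sample $y$, decomposing $\EE_{\xi_k}\|g(y,\xi_k)\|^2=\EE_{\xi_k}\|g(y,\xi_k)-\nabla f(y)\|^2+\|\nabla f(y)\|^2$ (the cross term vanishing by Assumption~\ref{assump:stochastic_gradient}(a)), I would bound the first piece by $\sigma^2$ via Assumption~\ref{assump:stochastic_gradient}(b) and the second by $M=\max_x\|\nabla f(x)\|^2$, uniformly in $y$. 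This gives $\EE\|A\|^2\le(\sigma^2+M)/N_\nu$, and combining with $\EE\|B\|^2\le\sigma^2$ yields the claimed $\sigma^2+(\sigma^2+M)/N_\nu$.

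The main obstacle, and the step to handle with care, is the dependence structure: the $N_\nu$ summands are \emph{not} independent because they share $\xi_k$, so one cannot naively divide a single-sample variance by $N_\nu$. The decomposition into $A$ and $B$ isolates the genuinely i.i.d.\ fluctuation, which shrinks like $1/N_\nu$, from the irreducible data noise, which does not; and the orthogonality of $A$ and $B$ under the conditional-mean-zero property is precisely what lets the two bounds combine cleanly without incurring a cross term.
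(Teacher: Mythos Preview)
Your proposal is correct and follows essentially the same route as the paper: the same bias--variance split $g_{N_\nu}^\nu-\nabla f^\nu=(g_{N_\nu}^\nu-g^\nu)+(g^\nu-\nabla f^\nu)$, the same use of $\EE[A\mid\xi_k]=0$ to kill the cross term, the same conditional i.i.d.\ argument to extract the $1/N_\nu$ factor, and the same bound of the conditional variance by the second moment $\EE\|g(y,\xi_k)\|^2\le\sigma^2+M$. The only cosmetic difference is that the paper carries out the Monte Carlo variance computation coordinate-wise, while you work directly with vector norms; your version is arguably cleaner and you make the decomposition $\EE\|g(y,\xi_k)\|^2=\EE\|g(y,\xi_k)-\nabla f(y)\|^2+\|\nabla f(y)\|^2$ explicit, which the paper leaves implicit.
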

The proof is deferred to the following subsection. 

With the above propositions, the stochastic gradient is well-defined for the subproblems.
Next, we show that the classical convergence result of the SGD method can be directly achieved for subproblem $(\mathrm{P})$.

Suppose the objective function $f^\nu$ satisfies the P\L\ inequality, i.e.
\begin{align}
    \frac{1}{2} \|\nabla f^\nu (x)\|^2 \geq \mu \left( f^\nu(x) - f^\nu_*\right), \ \forall x \in \Omega.
\end{align}

The following theorem is derived directly from Theorem 4 in \cite{karimi2016linear}:
\begin{theorem}\label{thm:sgd_subprob}
    For the $\nu$th subproblem $(\mathrm{P}^\nu)$, suppose $f^\nu$ satisfies the P\L\ inequality, Assumption \ref{assump:Lipschitz} and Assumption \ref{assump:stochastic_gradient} are satisfied.
    For the Algorithm \ref{algorithm:SGD_subproblem}, \\
    (a) Let the stepsize $\alpha_k = \frac{2k + 1}{2\mu (k+1)^2}$ , then the convergence rate is
    \begin{align}
        \EE \left[ f^\nu(x_k) - f^\nu_*\right] \leq \frac{L}{2k\mu^2} (\sigma^2 + M) \left(1 + \frac{1}{N_\nu}\right),
    \end{align}
    where $M = \max_x \|\nabla f(x)\|^2$.\\
    (b) For constant stepsize $\alpha_k = \alpha < \frac{1}{2\mu}$, the convergence rate is
    \begin{align}
        \EE \left[ f^\nu(x_k) - f^\nu_*\right] \leq (1-2\mu\alpha)^k (f^\nu(x_0) - f^\nu_*) + \frac{L\alpha}{4\mu} (\sigma^2 + M) \left(1 + \frac{1}{N_\nu}\right).
    \end{align}
\end{theorem}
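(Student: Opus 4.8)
The plan is to reduce the statement to a black-box application of Theorem~4 of \cite{karimi2016linear}, which furnishes exactly these two stepsize-dependent rates for SGD under the P\L\ inequality, $L$-smoothness, and a uniform bound on the \emph{second moment} of the stochastic gradient. The three structural hypotheses transfer immediately: $f^\nu$ is $L$-smooth by Lemma~\ref{lemma:Lipschitz_subproblem}, the P\L\ inequality is assumed for $f^\nu$, and the estimator $g_{N_\nu}^\nu(X_k^\nu,\xi_k)$ is unbiased for $\nabla f^\nu(x_k)$ by Lemma~\ref{lemma:stochastic_gradient_2}(a). The only quantity left to supply is the constant $C^2$ appearing in Karimi's bound, namely a uniform bound on $\EE[\|g_{N_\nu}^\nu(X_k^\nu,\xi_k)\|^2]$; I claim $C^2 = (\sigma^2 + M)(1 + 1/N_\nu)$ is admissible, and the rest is substitution.

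To obtain $C^2$, I would first invoke unbiasedness to split the second moment via the bias--variance decomposition,
\begin{align}
    \EE\big[\|g_{N_\nu}^\nu(X_k^\nu,\xi_k)\|^2\big] = \|\nabla f^\nu(x_k)\|^2 + \EE\big[\|g_{N_\nu}^\nu(X_k^\nu,\xi_k) - \nabla f^\nu(x_k)\|^2\big].
\end{align}
The second term is at most $\sigma^2 + (\sigma^2+M)/N_\nu$ by Lemma~\ref{lemma:stochastic_gradient_2}(b). For the first term, I would use the integral representation $\nabla f^\nu(x) = \int_{\RR^d} \nabla f(y)\,\phi_{1/\nu}(x-y)\,\dd y$ together with the fact that $\phi_{1/\nu}(x-\cdot)$ is a probability density (nonnegative and integrating to $1$); Jensen's inequality applied to the convex map $\|\cdot\|^2$ then gives $\|\nabla f^\nu(x)\|^2 \leq \int_{\RR^d} \|\nabla f(y)\|^2 \phi_{1/\nu}(x-y)\,\dd y \leq M$ uniformly in $x$. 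Adding the two bounds yields $\EE[\|g_{N_\nu}^\nu(X_k^\nu,\xi_k)\|^2] \leq M + \sigma^2 + (\sigma^2+M)/N_\nu = (\sigma^2+M)(1 + 1/N_\nu)$, which is precisely the desired $C^2$.

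With $C^2$ in hand, parts (a) and (b) follow by substituting into the two conclusions of Karimi's Theorem~4: for the diminishing stepsize $\alpha_k = (2k+1)/(2\mu(k+1)^2)$ the rate is $\EE[f^\nu(x_k) - f^\nu_*] \leq LC^2/(2k\mu^2)$, and for the constant stepsize $\alpha < 1/(2\mu)$ the rate is $(1-2\mu\alpha)^k(f^\nu(x_0)-f^\nu_*) + L\alpha C^2/(4\mu)$; inserting $C^2 = (\sigma^2+M)(1+1/N_\nu)$ reproduces both displayed bounds verbatim.

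The step I expect to require the most care—rather than routine bookkeeping—is pinning down that Karimi's Theorem~4 is stated in terms of a bound on the raw second moment $\EE\|g\|^2$ rather than the centered variance, since the clean factorization $(\sigma^2+M)(1+1/N_\nu)$ hinges on combining $\|\nabla f^\nu\|^2\le M$ with the variance estimate in exactly this way. A secondary subtlety is verifying that $M = \max_x\|\nabla f(x)\|^2$ is finite and attained, which relies on the standing assumption that the iterates remain in the bounded set $\Omega$ and on the continuity of $\nabla f$; the Jensen step then transports this bound to $\nabla f^\nu$ without enlarging the constant.
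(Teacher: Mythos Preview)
Your proposal is correct and follows essentially the same route as the paper: both arguments use the bias--variance decomposition together with Lemma~\ref{lemma:stochastic_gradient_2} to bound $\EE[\|g_{N_\nu}^\nu(X_k^\nu,\xi_k)\|^2]$, bound $\|\nabla f^\nu(x_k)\|^2\le M$ via the integral representation (the paper uses the triangle inequality and a sup bound where you invoke Jensen, which is equivalent here), and then substitute the resulting constant $(\sigma^2+M)(1+1/N_\nu)$ into Theorem~4 of \cite{karimi2016linear}. The only item the paper adds that you omit is a one-line remark that $f^\nu$ is continuous on the closed bounded set $\Omega$, so $f^\nu_*$ is attained.
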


\subsection{Deferred proofs}
Proof of Lemma \ref{lemma:Lipschitz_subproblem}.
\begin{proof}
    For any $x,y \in \RR^d$,
    \begin{align*}
        \|\nabla f^\nu(x) - \nabla f^\nu(y)\| &= \left\| \int_{\RR^d} \phi_{1/\nu} (z) \nabla f(x - z) - \phi_{1/\nu} (z) \nabla f(y-z) \dd z \right\| \\
        &= \left\| \int_{\RR^d} \left(\nabla f(x-z) - \nabla f(y-z)\right) \phi_{1/\nu}(z) \ \dd z\right\| \quad \text{(Bochner integral)} \\
        &\leq  \int_{\RR^d} \left\| \left( \nabla f(x-z) - \nabla f(y-z)\right) \phi_{1/\nu} (z) \right\| \ \dd z \\
        &= \int_{\RR^d} \left\| \left( \nabla f(x-z) - \nabla f(y-z)\right)\right\| \phi_{1/\nu}(z) \ \dd z \\
        &\leq L \|x-y\| \int_{\RR^d} \phi_{1/\nu}(z)\ \dd z = L \|x-y\|.
    \end{align*}
\end{proof}

Proof of Lemma \ref{lemma:stochastic_gradient_1}.
\begin{proof}
    (a):
    \begin{align*}
        \EE \left[  g^\nu (x_k, \xi_k)\right] &= \EE \left[ \int_{\RR^d}  g(y,\xi_k) \phi_{1/\nu} (x_k - y) \ \dd y\right] \\
        &= \int_{\RR^d} \EE_{\xi_k} \left[  g(y,\xi_k) \right] \phi_{1/\nu} (x_k - y) \ \dd y \\
        &= \int_{\RR^d} \nabla f(y) \phi_{1/\nu} (x_k - y) \ \dd y  = \nabla f^\nu(x_k).
    \end{align*}
    (b):
    \begin{align*}
        &\EE \left[ \left\| g^\nu(x_k, \xi_k) - \nabla f^\nu (x_k) \right\|^2 \right] \\
        &= \EE \left[ \left\| \int_{\RR^d} g(y,\xi_k) \phi_{1/\nu} (x_k-y) \ \dd y - \int_{\RR^d} \nabla f(y) \phi_{1/\nu} (x_k-y) \ \dd y \right\|^2 \right] \\
        &= \EE \left[ \left\| \int_{\RR^d} \left(g(y,\xi_k) - \nabla f(y)  \right) \phi_{1/\nu} (x_k-y) \ \dd y\right\|^2 \right] \\
        &\leq \EE \left[  \left( \int_{\RR^d} \left\|\left(g(y,\xi_k) - \nabla f(y)  \right) \phi_{1/\nu} (x_k-y)\right\| \ \dd y \right)^2 \right] \ \text{Bochner integral}\\
        &= \EE \left[  \left( \int_{\RR^d} \left\|g(y,\xi_k) - \nabla f(y) \right\| \phi_{1/\nu} (x_k-y) \ \dd y \right)^2 \right]\\
        &\leq \EE \left[\left( \max_y\left( \left\|g(y,\xi_k) - \nabla f(y)\right\| \right)  \int_{\RR^d}  \phi_{1/\nu} (x_k-y) \ \dd y \right)^2 \right] \\
        &= \EE \left[\left( \max_y\left( \left\|g(y,\xi_k) - \nabla f(y)\right\| \right)\right)^2 \right] \leq \sigma^2.
    \end{align*}
\end{proof}

Proof of Lemma \ref{lemma:stochastic_gradient_2}.
\begin{proof}
    (a): 
    \begin{align*}
        \EE\left[ g_{N_\nu}^\nu (X_k^\nu, \xi_k) \right] &= \EE \left[ \EE \left[ g_{N_\nu}^\nu (X_k^\nu, \xi_k) | \xi_k\right]\right] \\
        &= \EE \left[ \EE \left[ \frac{1}{N_\nu} \sum_{i=1}^{N_\nu} g ((X_k^\nu)_i, \xi_k) | \xi_k\right]\right] \\
        &= \EE \left[ \EE \left[ g (X_k^\nu, \xi_k) | \xi_k\right]\right] \\
        &= \EE \left[ \int_{\RR^d} g(y,\xi_k) \phi_{1/\nu}(x_k - y) \ \dd y \right] \\
        &= \EE \left[ g^\nu (x_k,\xi_k)\right] = \nabla f^\nu(x_k) \quad \text{by Lemma \ref{lemma:stochastic_gradient_1} (a)}.
    \end{align*}
    (b):
    First,
    \begin{align*}
        \EE \left[  g_{N_\nu}^\nu (X_k^\nu, \xi_k) | \xi_k \right] &= \EE \left[  \left. \frac{1}{N_\nu} \sum_{i=1}^{N_\nu} g ((X_k^\nu)_i, \xi_k) \right| \xi_k \right]\\
        &= \EE \left[ \left. g(X_k^\nu, \xi_k) \right| \xi_k \right] = \int_{\RR^d} g(y,\xi_k) \phi_{1/\nu}(x_k - y) \ \dd y = g^\nu (x_k, \xi_k).
    \end{align*}
    Denote $\epsilon(\xi_k) = g^\nu (x_k, \xi_k) - \nabla f^\nu (x_k)$. By Lemma \ref{lemma:stochastic_gradient_1} (a) and (b),
    \begin{align}
        \EE[\epsilon(\xi_k)] = 0, \quad \EE[\|\epsilon(\xi_k)\|^2] \leq \sigma^2.
    \end{align}
    Then, we have the bias–variance decomposition:
    \begin{align*}
        &\EE \left[ \left\| g_{N_\nu}^\nu (X_k^\nu, \xi_k) - \nabla f^\nu (x_k) \right\|^2 \right] \\
        &= \EE \left[ \left\| g_{N_\nu}^\nu (X_k^\nu, \xi_k) - g^\nu(x_k,\xi_k) + g^\nu(x_k,\xi_k) - \nabla f^\nu (x_k) \right\|^2 \right]\\
        &= \EE \left[ \left\| g_{N_\nu}^\nu (X_k^\nu, \xi_k) - g^\nu(x_k,\xi_k) + \epsilon(\xi_k) \right\|^2 \right] \\
        &= \EE \left[ \left\| g_{N_\nu}^\nu (X_k^\nu, \xi_k) - g^\nu(x_k,\xi_k) \right\|^2 \right] + 2\EE\left[ \epsilon(\xi_k)^T \left( g_{N_\nu}^\nu (X_k^\nu, \xi_k) - g^\nu(x_k,\xi_k) \right)\right] + \EE\left[ \|\epsilon(\xi_k)\|^2 \right] \\
        &= \EE \left[ \left\| g_{N_\nu}^\nu (X_k^\nu, \xi_k) - g^\nu(x_k,\xi_k) \right\|^2 \right] + \EE\left[ \|\epsilon(\xi_k)\|^2 \right].
    \end{align*}
    Next,
    \begin{align*}
        &\EE \left[ \left\| g_{N_\nu}^\nu (X_k^\nu, \xi_k) - g^\nu(x_k,\xi_k) \right\|^2 \right]\\
        &= \EE \left[ \EE \left[ \left. \sum_{l=1}^d \left( g_{N_\nu}^\nu (X_k^\nu, \xi_k)_l - g^\nu (x_k, \xi_k)_l \right)^2\right| \xi_k \right]\right] \\
        &= \EE \left[\sum_{l=1}^d \VV\left[ \left. g_{N_\nu}^\nu (X_k^\nu, \xi_k)_l \right| \xi_k \right] \right] = \EE \left[\sum_{l=1}^d \VV\left[ \left. \frac{1}{N_\nu} \sum_{i=1}^{N_\nu} g ((X_k^\nu)_i, \xi_k)_l \right| \xi_k \right] \right] \\
        &= \frac{1}{N_\nu} \EE \left[  \sum_{l=1}^d \VV\left[ \left.g (X_k^\nu, \xi_k)_l \right| \xi_k \right] \right] \\
        &\leq \frac{1}{N_\nu} \EE \left[ \sum_{l=1}^d \EE\left[ \left. \left(g (X_k^\nu, \xi_k)_l\right)^2 \right| \xi_k \right] \right] = \frac{1}{N_\nu} \EE \left[\EE\left[ \left. \left\|g (X_k^\nu, \xi_k)\right\|^2 \right| \xi_k \right] \right] \\
        &= \frac{1}{N_\nu} \EE \left[ \int_{\RR^d} \left\|g (y, \xi_k)\right\|^2 \phi_{1/\nu}(x_k - y) \ \dd y \right] \\
        &= \frac{1}{N_\nu}  \int_{\RR^d} \EE \left[ \left\|g (y, \xi_k)\right\|^2\right] \phi_{1/\nu}(x_k - y) \ \dd y \leq \frac{\sigma^2 + M}{N_\nu},
    \end{align*}
    where $M = \max_x \|\nabla f(x)\|^2$.
    Then,
    \begin{align*}
        \EE \left[ \left\| g_{N_\nu}^\nu (X_k^\nu, \xi_k) - \nabla f^\nu (x_k) \right\|^2 \right] \leq \sigma^2 + \frac{\sigma^2 + M}{N_\nu}.
    \end{align*}
\end{proof}

Proof of Theorem \ref{thm:sgd_subprob}.
\begin{proof}
    By the construction of $f^\nu$ and Proposition \ref{prop:8.10}, $f^\nu$ is continuous on the closed set $\Omega$.
    Then subproblem ($P^\nu$) has a non-empty solution set.

    By Lemma \ref{lemma:stochastic_gradient_2} (a) and (b),
    \begin{align*}
        \EE \left[ \left\| g_{N_\nu}^\nu (X_k^\nu, \xi_k) - \nabla f^\nu (x_k) \right\|^2 \right] 
        &= \EE \left[ \left\| g_{N_\nu}^\nu (X_k^\nu, \xi_k)\right\|^2 - 2\left< g_{N_\nu}^\nu (X_k^\nu, \xi_k), \nabla f^\nu (x_k) \right> + \|\nabla f^\nu (x_k)\|^2  \right]\\
        &\leq \EE \left[ \left\| g_{N_\nu}^\nu (X_k^\nu, \xi_k)\right\|^2 \right] - \|\nabla f^\nu (x_k)\|^2
        \leq \sigma^2 + \frac{\sigma^2 + M}{N_\nu}.
    \end{align*}
    Also,
    \begin{align*}
        \|\nabla f^\nu (x_k)\|^2 &= \left\| \int_{\RR^n} \nabla f (x_k-y) \phi_{1/\nu}(y) \ \dd y  \right\|^2 \\
        &\leq \left( \int_{\RR^n} \left\| \nabla f (x_k-y) \phi_{1/\nu}(y) \right\| \ \dd y \right)^2 \\
        &\leq \left( \max_y (\left\| \nabla f (x_k-y) \right\| ) \int_{\RR^n} \phi_{1/\nu}(y)  \ \dd y \right)^2 = \max_x \|\nabla f(x)\|^2 \eqdef M.
    \end{align*}
    Then,
    \begin{align*}
        \EE \left[ \left\| g_{N_\nu}^\nu (X_k^\nu, \xi_k)\right\|^2 \right] \leq \sigma^2 + \frac{\sigma^2 + M}{N_\nu} + M.
    \end{align*}
    The proof is finished directly by Theorem 4 in \cite{karimi2016linear}.
\end{proof}

\section{Synthetic numerical experiments}
In this section, we provide two synthetic toy experiments to demonstrate that the ML-SGD scheme has the potential to overcome the local minimum solution in a low-dimensional setting.

\begin{figure}[h]
    \centering
    \includegraphics[width=1\textwidth]{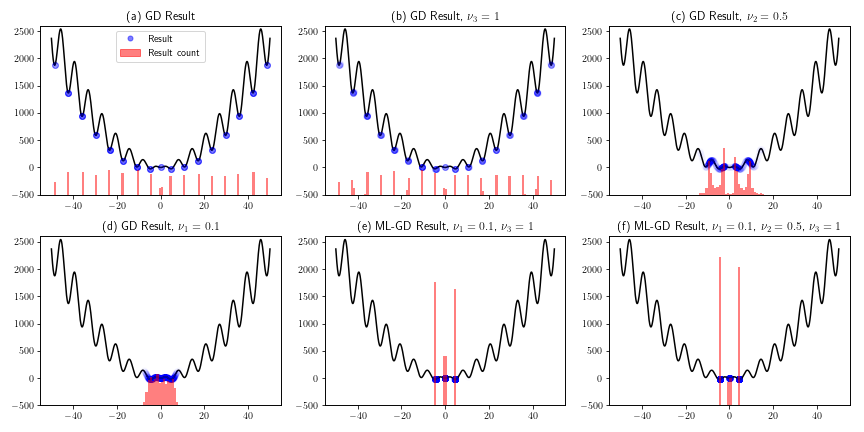}
    \caption{Distribution of the finial results. (a): The gradient descent (GD) algorithm is performed on the objective function $f_1(x)$. (b), (c), (d): Algorithm \ref{algorithm:SGD_subproblem} with accurate gradients instead of stochastic gradients is performed on $f_1(x)$ with different smooth coefficients. (e), (f): two ML-GD schemes are performed on $f_1(x)$ with different smoothing strategy.}
    \label{fig:toy_prob_1}
\end{figure}

The first example is
\begin{align}
    \min_x f_1(x), \quad \text{where }f_1(x) = x^2 + 10 x \sin(x).
\end{align}
This example is also used in \cite{Harshvardhan2021Escaping}.
The objective function $f_1$ is highly nonlinear, with two global minima located near $\pm 4.7$ \cite{Harshvardhan2021Escaping}.
The gradient descent algorithm is used to optimize the actual objective function $f_1$.
Algorithm \ref{algorithm:SGD_subproblem} is used to optimize the smoothed objective functions $f_1^{\nu_i}$, where $\nu_1 = 0.1$, $\nu_2 = 0.5$, $\nu_3 = 1$, and $N_\nu = 10$.
The standard Gaussian kernel is used.
Notice that $\nu_1$ and $\nu_2$ are not integers in this example.
This will not affect the theoretical properties of Algorithm \ref{algorithm:SGD_subproblem} and \ref{algorithm:ML-SGD} since we can construct a new ``base'' kernel $\phi^{'} = \phi_{1/\nu_1}$, then new smooth coefficients can be achieved as $\nu_1^{'} = 1$, $\nu_2^{'} = 5$, and $\nu_3^{'} = 10$.
For comparison, we fix the step size with $\alpha = 0.001$, and for each optimization process, the total iteration number is $1000$.
We generate 1000 random initial values in $[-50,50]$ with uniform distribution.
First, the gradient descent algorithm is performed to minimize $f_1$ with the above randomly generated initial values.
The result is shown in Figure \ref{fig:toy_prob_1} (a).
Next, The minimization result of smoothed objective functions $f^{\nu_1}$, $f^{\nu_2}$, and $f^{\nu_3}$ are shown in Figure \ref{fig:toy_prob_1} (b), (c), and (d).
Then, the multi-layer strategy is performed with two layers ($\nu_1$ and $\nu_3$) and three layers ($\nu_1$, $\nu_2$, and $\nu_3$) by Algorithm \ref{algorithm:ML-SGD}.
Since no stochastic gradient is used, we denote this case as ML-GD.
The results are shown in Figure \ref{fig:toy_prob_1} (e) and (f).

As can be seen from subfigure (a), when the actual objective function is solved directly, most of the results are trapped in the local minimum.
For smoothed problems, different degrees of smoothness affect the results of optimization.
When $\nu$ is large, the objective function $f^\nu$ is less smooth.
Thus, the results in subfigure (b) are more trapped in the local minima than (c) and (d), which are farther away from the global minima.
Subfigures (e) and (f) show evidence that the multi-layer structure can indeed improve the optimization result.
With the multi-layer strategy, the optimization results are largely improved.
Compared to other cases, there are more optimization results that fall into the global minimum in (e) and (f).
In addition, by designing a multi-layer structure that transitions more smoothly, the optimization results can be further improved.

\begin{figure}[h]
    \centering
    \includegraphics[width=0.7\textwidth]{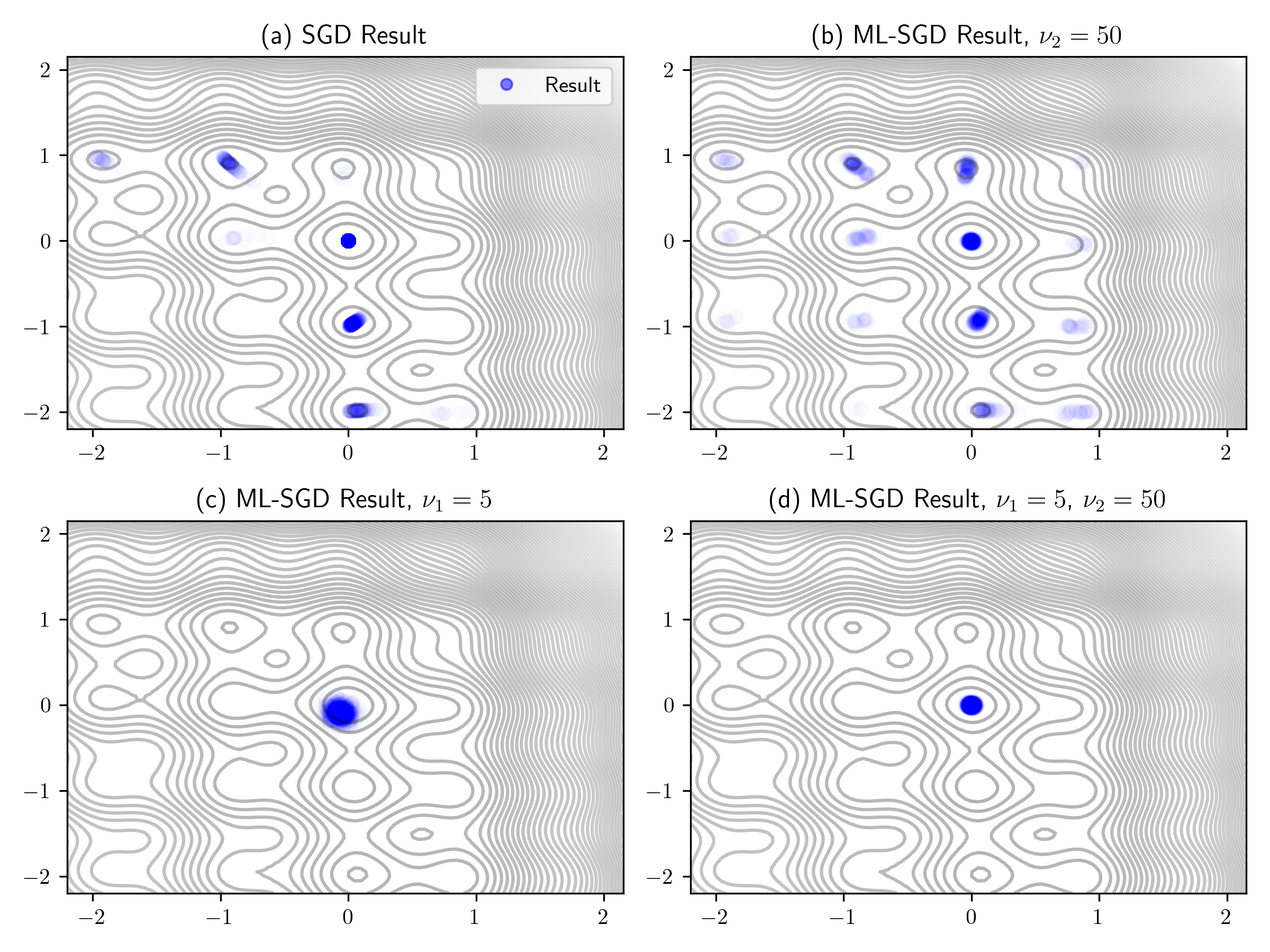}
    \caption{Distribution of the finial results. (a): The SGD method is performed to minimize the objective function $f_2(x)$. (b), (c): Algorithm \ref{algorithm:SGD_subproblem} are performed on $f_2(x)$ with different smooth coefficients. (d): A ML-SGD result on $f_2(x)$. }
    \label{fig:toy_prob_2}
\end{figure}

The second example is a two-dimensional toy problem
\begin{align}
    \min_x \ (f_2 (x) &\defeq f_R(x) + f_H(x)) \\
    \text{where } f_R(x) &= 20 + (x_1^2 - 10\cos(2\pi x_1)) + (x_2^2 - 10\cos(2\pi x_2)) \\
    f_H(x) &=  ((x_1+3)^2 + (x_2+2) - 11)^2 + ((x_1+3) + (x_2+2)^2 - 7)^2.
\end{align}
The global minimum is $(0,0)$.
We simulate the stochastic gradient by letting $g(x_k) = \nabla f_R(x_k)$ or $g(x_k) =\nabla f_H(x_k)$ with equal probability at $k$th iteration.
There are 1000 random initial values generated with uniform distribution in $[-2,2]\times [-2,2]$.

The SGD method is used to minimize the actual objective function $f_2(x)$, and the result is shown in Figure \ref{fig:toy_prob_2} (a).
Although many local minimum solutions are shown in Figure \ref{fig:toy_prob_2} (a), there are still many solutions around the global minimum.
A recent work \cite{kleinberg2018alternative} suggests that the traditional SGD algorithm can also be explained by the graduated optimization approach, i.e., the random properties of gradients are equivalent to smoothing the objective function.
Let $\nu_1 = 5$, $\nu_2 = 50$, $N_\nu = 10$.
Then, Algorithm \ref{algorithm:SGD_subproblem} is used to minimize the smoothed objective function $f_2^{\nu_1}(x)$ and $f_2^{\nu_2}(x)$ with Gaussian kernel.
As shown in subfigures (b) and (c), different degrees of smoothness affect the optimization results.
When using Algorithm \ref{algorithm:SGD_subproblem} to minimize $f_2^{\nu_1}$, most of the results are located near the global minimum.
Based on this result, we minimize $f_2$ by Algorithm \ref{algorithm:ML-SGD} with $\nu_1 = 5$ and $\nu_2 = 50$ as the first and the second layer.
The results are shown in subfigure (d), where most of the results are very closed to the global minimum.

\section{Discussion and outlook}

In this work, the SGD method is studied with the graduated optimization approach.
Under this view of point, the ML-SGD scheme is proposed, and the convergence of graduated optimization is studied.
Notice that our main theorem \ref{thm:main_result1} only provides an asymptotic behavior of graduated optimization, i.e., given the actual problem $(\mathrm{P})$, we can construct a series of smoothed subproblems $(\mathrm{P}^\nu)$ such that the solutions of the subproblems converge to the solution of the actual problem.
Thus, it is still a heuristic decision to choose the kernel $\phi$ and the smooth parameter $\nu$ for a given optimization problem.
It is also worth noting that although ML-SGD performs well in low-dimensional situations, the effect on high-dimensional problems still needs to be experimentally verified.
Especially for the deep neural network training problems, evidence shows that the transition in the loss landscapes between the chaotic area and nearly convex area is quick \cite{li2017visualizing}.
As a result, once the initial value or the model in the current iteration is in a nearly convex area, the role of graduate optimization may not be obvious.
This makes the application value of graduate optimization and ML-SGD scheme need further research.

On the other hand, the algorithmic structure of graduated optimization through random sampling is particularly suitable for the distributed computing environment of contemporary machine learning tasks.
With only minor changes, the graduated optimization approach can be deployed in massively distributed computing problems, which opens up the possibility for further study of the practical role of graduated optimization.
In addition, as we built the ML-SGD algorithm, the idea of multi-layer structure can be used for other first-order optimization algorithms, such as Adagrad, RMSprop, Adam, etc.
Continued research in this direction could lead to more effective training algorithms for large-scale neural network models.

\section{Acknowledgment}

The work of the first and second authors was funded by NSERC (Natural Sciences and Engineering Research Council of Canada) through the grant NSERC Discovery Grant (RGPIN-2018-04328).
The work of the first and third authors was funded by NSERC Discovery Grant (RGPIN-2018-05147), NSERC RTI Grant (RTI-2021-00675), University of Calgary VPR Catalyst grant, New Frontiers in Research Fund (NFRFE-2018-00748).
The authors would like to thank the support.

\newpage
\bibliographystyle{amsplain}
\bibliography{references}
\end{document}